\newcommand{\R}{{\mathbb R}}
\newcommand{\C}{{\mathbb C}}
\newtheorem{thm}{Theorem}[section]
\newtheorem{lem}[thm]{Lemma}
\theoremstyle{definition}
\theoremstyle{remark}
\newtheorem{rem}[thm]{Remark}
\numberwithin{equation}{section}
\begin{document}
	\title[Hardy's Theorem for the $(k,\frac{2}{n})-$Fourier Transform]{Hardy's Theorem for the $(k,\frac{2}{n})-$Fourier Transform}
	
	%----------Author 1
	\author[Hanen Jilani]{Hanen Jilani}
	\address[Hanen Jilani]{Universit\'e de Tunis El Manar, Facult\'e des Sciences de Tunis.
		\\ Laboratoire d'Analyse Math\'ematiques et Applications LR11ES11,
		2092 Tunis, Tunisie}
	\email{hanen.jilani@fst.utm.tn}
	\author[Selma Negzaoui]{Selma Negzaoui}
	\address[{Selma Negzaoui}]{Preparatory Institute of Engineering Studies of Monastir, University of Monastir, and Universit\'e de Tunis El Manar, Facult\'e des Sciences de Tunis. Laboratoire d'Analyse Math\'ematique et Applications LR11ES11, 2092 Tunis, Tunisie.
	}
	\email{selma.negzaoui@fst.utm.tn}
	%\thanks{This work was completed with the support of our
		%\TeX-pert.}
	%----------Author 2

	%----------classification, keywords, date
	\subjclass{Primary 42A38, 44A15, 33C10 Secondly 35A22}
	
	\keywords{Hardy's Theorem,  Generalized Fourier transform, Cowling-Price Theorem, Bessel functions, Heat equation, Dynamic version of Hardy's theorem}
	\date{\today}
	%----------additions
	%\dedicatory{To my boss}
	%%% ----------------------------------------------------------------------
	\begin{abstract} 
		By comparing a function and its $(k, \frac{2}{n})-$Fourier transform to a Gaussian analogue, $e^{-na|x|^\frac{2}{n}}$,  we establish a Hardy-type uncertainty principle using Phragm\'en-Lindl\"of lemma. Furthermore, we investigate the heat equation in this context, deriving a dynamical version of Hardy's theorem that illustrates the temporal evolution of the uncertainty principle. We also extend our results to $L^p-L^q$ versions, proving Miyachi-type and Cowling-Price-type theorems for the $(k,\frac{2}{n})$-Fourier transform. 
	\end{abstract}
	\maketitle 
	%\tableofcontents
	\section{Introduction}
%%% ----------------------------------------------------------------------
The Heisenberg uncertainty principle states that the position and momentum of a quantum particle cannot be measured simultaneously with arbitrary precision. This fundamental concept has been widely analyzed through the interplay between a function and its Fourier transform. In 1933, Hardy \cite{Ha} demonstrated a striking mathematical manifestation of this principle by showing that a function and its Fourier transform cannot both decay too rapidly compared to the Gaussian, which is optimally localized in both time and frequency domains.
More precisely, let $a$ and $b$ be two positive constants, and let $f$ be a measurable function on $\mathbb{R}$ satisfying
$$\displaystyle{|f(x)|\lesssim e^{-ax^2} \quad\text{ and }\quad |\widehat{f}(x)|\lesssim e^{-bx^2}}.$$
Then $f = 0$ almost everywhere if $ab >\frac{1}{4}$, and $f(x)=C e^{-ax^2}$ for some constant $C$ if $ab = \frac{1}{4}$.
\\
Hardy presented two distinct proofs of his theorem, both involving holomorphic functions and relying on results from complex analysis. The first proof utilizes the Phragm\'en-Lindel\"of principle for entire functions. The second proof also involves entire functions but relies solely on Liouville's theorem, particularly in the case when $ab>\frac{1}{4}$. 
\\
Recently, a significant advancement in understanding this fundamental theorem was achieved by \cite{EKPV}, which provided a proof that does not rely on complex analysis but instead uses real-variable methods by considering Schrödinger evolutions. This result highlights the deep connection between harmonic analysis and partial differential equations. References \cite{CEKPV2010} and \cite{Malinnikova} explore various dynamical versions of the Hardy uncertainty principle for the Fourier transform, including applications to the heat equation.

In this paper, we aim to establish Hardy's theorem in the setting of the one-dimensional $(k,\frac{2}{n})-$generalized Fourier transform, introduced by Ben Sa\"id, Kobayashi, and Ørsted in \cite{BKO}. Their approach provides a deformation of the classical setting by considering the Hamiltonian $$\Delta_{k,a}=\| x\|^ {2-a}\Delta_k -\| x\|^a,$$
where the deformation parameter $a$ is a positive real number arising from the interpolation of minimal unitary representations of two distinct reductive groups, and $\Delta_k$  is the Dunkl Laplacian. In the one-dimensional case, it states as:  
\begin{equation} \label{DL} \Delta_{k} f(x)=f''(x)+\frac{2k}{x}f'(x)-k\frac{f(x)-f(-x)}{x^2},\qquad x \in \R \setminus \{0\}.                                                                                                                  \end{equation}
The generator $\Delta_{k,a}$ allows the construction of a $(k,a)$-generalized Fourier transform, defined by
\[
\mathcal{F}_{k,a}
= e^{\frac{i\pi}{2a}(2\langle k\rangle + N + a - 2)}
   \exp\left(\frac{i\pi}{2a}\,\Delta_{k,a}\right).
\]
The transform $\mathcal{F}_{k,a}$ admits an integral representation involving a kernel $B_{k,a}$ and shares several properties with the classical case, such as the Plancherel formula, the Heisenberg inequality, and others (cf.~\cite{BKO}).

However, many challenging questions remain open, even in the one-dimensional case. For instance, the boundedness of the kernel $B_{k,a}$ and the invariance of the Schwartz space under $\mathcal{F}_{k,a}$, as discussed in \cite{GoIvTi23}, highlight the difficulty of controlling both a function $f$ and its transform $\mathcal{F}_{k,a}(f)$.

The Hardy uncertainty principle provides one result that addresses this issue. However, for the transform $\mathcal{F}_{k,a}$, this principle has so far been established only in the case $a = 2$, which corresponds to the Dunkl transform for arbitrary $k > 0$ (cf.~\cite{GoTr}).

In this work, we address this question for the case $a=\frac{2}{n}$, where $n$ is a positive integer, in one dimension. For clarity, we simplify the notation by writing $\mathcal{F}_{k,n}$ and $B_{k,n}$ instead of $\mathcal{F}_{k,\frac{2}{n}}$ and $B_{k,\frac{2}{n}}$ respectively.

The kernel $B_{k,n}(x, \lambda)$ has the following expression in terms of normalized Bessel functions of indices  $\alpha=kn-\frac{n}{2}$ and $\alpha+n$:
\begin{eqnarray}\label{Bkn}
	B_{k,n}(x,\lambda)=j_{\alpha}(n\vert \lambda x\vert^{\frac{1}{n}})+
	(-i)^n (\frac{n}{2})^n \frac{\Gamma(\alpha+1)}{\Gamma(\alpha+n+1)}
	\lambda x j_{\alpha+n}(n\vert \lambda x\vert^{\frac{1}{n}}).
\end{eqnarray} 
Clearly, the fractional power  $|\lambda x|^\frac{1}{n}$ in the expansion of the kernel \eqref{Bkn} %$B_{k,n}(.,\lambda)$ 
prevents $\mathcal{F}_{k,n}f$,  from being an entire function--a crucial hypothesis for proving Hardy's theorem via complex analysis.
To adress this challenge, we develop two specific deformations of $\mathcal{F}_{k,n}$, denoted by $\mathcal{T}_1$ and $\mathcal{T}_2$ corresponding to the split of even and odd parts (cf. \eqref{T1} and \eqref{T2}).
 By introducing these deformations, we are able to apply a  Phragm\'en-Lindel\"of-type lemma to the new operators and thereby extend Hardy's theorem to the generalized Fourier transform $\mathcal{F}_{k,n}$.
This approach has enabled us to establish the following Hardy-type theorem for the $(k,\frac{2}{n})-$Fourier transform:
\begin{thm}\label{Hardyth}
Let $a$ and $b$ be positive real numbers. Consider a measurable function $f$ on $\mathbb{R}$ satisfying the following inequalities:
\begin{equation}
\vert f(x)\vert \leq C \, e^{-na\vert x\vert^{\frac{2}{n}}},
\label{C1}
\end{equation} and
\begin{equation}
\vert\mathcal{F}_{k,n}f(x)\vert \leq C \, e^{-nb\vert x\vert^{\frac{2}{n}}}.
\label{C2}
\end{equation} Then:
	\begin{enumerate}
		\item[1.]If $ab>\frac{1}{4}$, then $f\equiv 0$.
		\item[2.]If $ab=\frac{1}{4}$, then $f(x)=\lambda\, e^{-na \vert x\vert^{\frac{2}{n}}}$ for some constant $\lambda$.
		\item[3.] If $ab<\frac{1}{4}$,  there exist infinitely many functions satisfying the given conditions.
	\end{enumerate}
\end{thm}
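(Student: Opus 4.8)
The plan is to remove the fractional exponent $|\lambda x|^{\frac1n}$ by a substitution that turns the anisotropic Gaussian $e^{-na|x|^{\frac2n}}$ into a genuine Gaussian, thereby recovering the entire-function structure on which the complex-analytic Hardy argument rests. First I would split $f=f_e+f_o$ into its even and odd parts. Since the bounds \eqref{C1}--\eqref{C2} involve only the even weight $e^{-n\,|x|^{\frac2n}}$, and since $\mathcal{F}_{k,n}$ preserves parity (the first summand of $B_{k,n}$ in \eqref{Bkn} is even in $x$ while the second is odd), both $f_e$ and $f_o$ again satisfy \eqref{C1}--\eqref{C2}, so it suffices to treat each parity separately. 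For even $f$ only the term $j_{\alpha}(n|\lambda x|^{\frac1n})$ survives, while for odd $f$ only $\lambda x\, j_{\alpha+n}(n|\lambda x|^{\frac1n})$ contributes; these two reductions are precisely the deformations $\mathcal{T}_1$ and $\mathcal{T}_2$.

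The heart of the argument is the substitution $\lambda=\operatorname{sgn}(\eta)|\eta|^{n}$, $x=\operatorname{sgn}(\xi)|\xi|^{n}$, under which $|\lambda x|^{\frac1n}=|\eta\xi|$ and the even Gaussian becomes $e^{-na\xi^{2}}$. Because the normalized Bessel function is even and entire, the real-axis value $j_{\alpha}(n|\eta\xi|)$ is the restriction of the entire function $\eta\mapsto j_{\alpha}(n\eta\xi)$; hence, writing $g(\xi)=f_e(\operatorname{sgn}(\xi)|\xi|^{n})$, the deformed transform $\mathcal{T}_1 g(\eta)$ becomes an ordinary Bessel transform that extends holomorphically to $\eta\in\C$. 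Using the classical estimate $|j_{\alpha}(z)|\le C\,e^{|\operatorname{Im} z|}$ together with $|g(\xi)|\le C e^{-na\xi^{2}}$ and completing the square, I would obtain the order-two bound $|\mathcal{T}_1 g(\eta)|\le C'\,e^{\frac{n}{4a}(\operatorname{Im}\eta)^{2}}$, while \eqref{C2} reads $|\mathcal{T}_1 g(\eta)|\le C\,e^{-nb\eta^{2}}$ on the real axis. The Phragmén--Lindelöf lemma with $A=\frac{n}{4a}$ and $B=nb$ (so that $A\lessgtr B\iff ab\gtrless\frac14$) then forces $\mathcal{T}_1 g\equiv 0$ when $ab>\frac14$ and $\mathcal{T}_1 g(\eta)=c\,e^{-nb\eta^{2}}$ when $ab=\frac14$. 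The odd part is handled identically through $\mathcal{T}_2$, where the factor $\lambda x$ produces only a polynomial correction $\eta^{n}$ that leaves the order-two growth unchanged.

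It remains to translate these conclusions back. When $ab>\frac14$ both parts vanish, giving $f\equiv 0$. When $ab=\frac14$, injectivity of $\mathcal{F}_{k,n}$ (from the Plancherel formula) combined with the known transform of the $(k,\frac2n)$-Gaussian identifies the even part as $f_e(x)=\lambda\,e^{-na|x|^{\frac2n}}$; for the odd part the lemma yields $\mathcal{F}_{k,n}f_o(\lambda)=c\,\lambda\,e^{-nb|\lambda|^{\frac2n}}$, whose only odd preimage is a multiple of $x\,e^{-na|x|^{\frac2n}}$, and this violates \eqref{C1} unless $c=0$, so $f_o\equiv 0$. For part (3), the one-parameter family $f_{c}(x)=e^{-nc|x|^{\frac2n}}$ with $c\in[a,\tfrac{1}{4b}]$ --- a nonempty interval precisely because $ab<\frac14$ --- satisfies both bounds, exhibiting infinitely many functions.

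I expect the principal obstacle to be the rigorous construction of $\mathcal{T}_1,\mathcal{T}_2$ and the proof of holomorphic extension with the stated growth: one must justify differentiation under the integral sign, control the Bessel kernel uniformly on compact subsets of $\C$, and verify the exponential-type bound on $j_\alpha$ across the admissible range of the parameter $\alpha=kn-\frac n2$. This is exactly the step where the fractional exponent $\frac1n$ would block a direct approach, and it is the role of the substitution and the deformations to make it tractable.
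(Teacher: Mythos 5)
Your overall strategy coincides with the paper's: split $f$ into even and odd parts, pass to the variable $\eta$ with $\lambda=\eta^{n}$ so that the two pieces of the kernel become entire in $\eta$ (these are exactly the operators $\mathcal{T}_1,\mathcal{T}_2$ of \eqref{T1}--\eqref{T2}), bound them by $e^{\frac{n}{4a}(\Im\eta)^2}$ via the Poisson representation of $j_\alpha$ and completion of the square, apply the Phragm\'en--Lindel\"of lemma to $e^{\frac{n}{4a}\eta^{2}}\mathcal{T}_l f(\eta)$, and settle the critical case through the spectral identity \eqref{FGaussian}. Two steps, however, need repair.

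First, your treatment of the odd kernel is the one place where the argument as written would fail. You propose to bound $\eta^{n}j_{\alpha+n}(n\eta|u|^{1/n})$ by pulling out ``a polynomial correction $\eta^{n}$,'' but a bound of the form $|\eta|^{n}e^{\frac{n}{4a}(\Im\eta)^{2}}$ does \emph{not} satisfy the hypothesis $|h(z)|\le Ce^{a\Re(z)^{2}}$ of Lemma \ref{lem3.3}: on the real axis the exponential is constant while the polynomial is unbounded, so the lemma does not apply to $h_2$ as stated. The missing ingredient is Gegenbauer's generalization of Poisson's integral, \eqref{GegenbauerBesselPoisson}--\eqref{unj}, which represents the \emph{combination} $w^{n}j_{\alpha+n}(w)$ as an integral against $C_n^{(\alpha)}(t)(1-t^2)^{\alpha-1/2}\cos(wt+n\pi/2)$ and hence yields $|u\,\eta^{n}j_{\alpha+n}(n\eta|u|^{1/n})|\le C(k,n)e^{n|\Im\eta|\,|u|^{1/n}}$ with no polynomial loss; this is precisely what Lemma \ref{entire2} establishes.

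Second, in the case $ab=\frac14$ your claim that the only odd preimage of $c\,\mathrm{sgn}(\lambda)e^{-nb|\lambda|^{2/n}}$ is a multiple of $x\,e^{-na|x|^{2/n}}$ is not correct: the inverse transform, computed via the Weber--Schafheitlin-type formula, is $c\,x\,e^{-na|x|^{2/n}}\,{}_1F_1\!\left(\tfrac n2;kn+\tfrac n2+1;na|x|^{2/n}\right)$ up to constants. Your conclusion nevertheless survives, since ${}_1F_1\ge 1$ for a positive argument and the factor $|x|$ already violates \eqref{C1} unless $c=0$; but the explicit hypergeometric computation is needed to justify the step. You should also note that for odd $n$ this computation is unnecessary: $\mathcal{F}_{k,n}f_o(x^{n})$ is then odd in $x$ while $c\,e^{-nbx^{2}}$ is even, forcing $c=0$ immediately. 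The remaining parts of your proposal (the parity reduction, the $ab>\frac14$ case, and the family $e^{-nc|x|^{2/n}}$ for $ab<\frac14$) match the paper's proof.
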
 

In our study, it is essential to consider the deformation of the Gaussian,
namely $e^{-na|x|^\frac{2}{n}}$, which arises naturally from the spectral property
\begin{equation}\label{FGaussian}\mathcal{F}_{k,n}\left(e^{-na\vert x\vert^{\frac{2}{n}}} \right) (x)=\frac{1}{(2a)^{k n-\frac{n}{2}+1}}e^{\frac{-n|x|^{\frac{2}{n}}}{4a}}.
\end{equation}
The equality \eqref{FGaussian} served in \cite[Theorem 5.29]{BKO} 
to establish a Heisenberg-type uncertainty principle for the generalized Fourier
transform.
Note that, by considering $n=1$, the optimal function coincides with the Gaussian function $e^{-ax^2}$ and we recover the result  for the Dunkl transform  \cite{GoTr} and for the classical Fourier \cite{Ha}.
Hardy's theorem has been extensively studied in diverse contexts, including those referenced in \cite{SiSu, HaKaNe, SmAb,MejjaoliNegzaoui, CoSiSu}. 
The Phragm\'en-Lindel\"of lemma plays a crucial role in the proofs of those results. However, dynamic versions of the Hardy uncertainty principle for the most recent transformations have not yet been explored.

%To abord dynamic resolution, we need to introduce a Sobolev kind space 

By considering the Dunkl Laplacian $\Delta_k$, defined by \eqref{DL}, the heat operator associated with $(k,\frac{2}{n})-$Fourier transform  is given as follows:
\begin{equation}\label{heatOper}H_{k,n}(t,x):=n\vert x\vert^{2-\frac{2}{n}}\Delta^x_k u(t,x)-\partial_t u(t,x),
\end{equation}
where $x\in \R$ and $t>0$. Here, the superscript in $\Delta^x_k $ 
indicates the relevant variable.
The generalized heat equation stands as
\begin{equation}\label{heateq}H_{k,n}u(t,x)=0.
\end{equation}
Considering the initial condition 
$$u_0(x)=u(0,x)\in L^2_{k,n}(\mathbb{R}),$$ we prove that solving this equation in a Sobolev kind space, is equivalent to a Hardy theorem, providing a dynamical version of the uncertainty principle.

Furthermore, we establish the analogues of Miyachi \cite{Mi} and Cowling-Price \cite{CoPr}, which relax the pointwise Gaussian bound to integrability conditions, that can be considered as $L^p-L^q$ versions of Hardy's theorem.
%We also investigate an $L^p-L^q$ version of Morgan's theorem \cite{MorganGW,BeMo}, which is a refinement of the Hardy's theorem by comparing an $L^p$ function % the decay conditions to $e^{-na|x|^\frac{\gamma}{n}}$, for $\gamma>2$.The sharp condition is given by the equation $(a\gamma)^{\frac{1}{\gamma}}(b\eta)^{\frac{1}{\eta}} =\left(\sin\!\left(\tfrac{\pi}{2}(\eta-1)\right)\right)^{\frac{1}{\eta}},$ where $\eta$ is the H\"older conjugate of $\gamma$, satisfying $\frac{1}{\gamma}+\frac{1}{\eta}​=1$.

The paper is structured as follows:
\\
In Section 2, we present some background information and derive key technical results necessary for our main objective. 
Section 3 is devoted to prove Theorem \ref{Hardyth}, the Hardy-type uncertainty principle. Then, in Section 4, we develop a dynamic version of Hardy's theorem within the framework of the associated heat equation. 
Finally, in Section 5, we extend our results to $L^p -L^q$ settings, proving Miyachi-type and Cowling-Price-type theorems for the  generalized Fourier transform $\mathcal{F}_{k,n}$.

\section{Preliminaries}
%\subsection{$(k,\frac{2}{n})-$Fourier transform and the Gaussian analogue}
Consider $n$ a positive integer, $a=\frac{2}{n}$, and $k\geq \frac{n-1}{2n}$.  The one-dimentional $(k,\frac{2}{n})-$generalized Fourier transform has the integral form:
\begin{equation}\label{Fkn}
\mathcal{F}_{k,n}f (\lambda)=\int_{\R} f(x)B_{k,n}(x,\lambda)d\mu_{k,n}(x),
\end{equation}
where $B_{k,n}(x,\lambda)$ is the kernel given by \eqref{Bkn} and $d\mu_{k,n}(x)$ is the weighted measure defined as:
\begin{equation}\label{measure}
d\mu_{k,n}(x)=\frac{1}{2\Gamma(kn-\frac{n}{2}+1)}\left( \frac{n}{2}\right) ^{kn-\frac{n}{2}}\vert x\vert^{2k+\frac{2}{n}-2}dx.
\end{equation}
For $1 \leq p < +\infty$, let's denote $L^p_{k,n} = L^p (\mathbb{R}, d\mu_{k,n})$, the space of measurable functions on $\R$ satisfying
$$\Vert f\Vert_{L^p_{k,n}}=\left( \int_{\mathbb{R}}\vert f(x)\vert^p d\mu_{k,n}(x) \right)^{\frac{1}{p}}<+\infty, $$
and for $p=+\infty$, $$\Vert f\Vert_{L^\infty_{k,n}}= \text{ess}\sup_{x\in\mathbb{R}}\vert f(x)\vert=\inf \{M \geq 0 ;\, |f| \leq M\;\; \mu_{k,n} a.e.\}. $$
 It is worth mentioning that the kernel $B_{k,n}$ is bounded on $\R$. This allows us to conclude that there exists a positive constant $C(k,n)=\|B_{k,n}\|_\infty$ satisfying, for all $f\in L^1_{k,n}$,
\begin{equation}
\Vert \mathcal{F}_{k,n}f\Vert_{L^\infty_{k,n}} \leq C(k,n)\Vert f\Vert_{L^1_{k,n}}.
\label{Fourier borne}
\end{equation}

It is well knoun from \cite{BKO} that the operator 
$\mathcal{F}_{k,n}: L^2_{k,n}\longrightarrow L^2_{k,n}$
is unitary and satisfies the Plancherel formula:
\begin{equation}
 \Vert \mathcal{F}_{k,n}f\Vert_{L^2_{k,n}}=\Vert f\Vert_{L^2_{k,n}}.
\end{equation}
The same reference, \cite[Theorem 5.3]{BKO}, provides the inversion formula as follows:
\begin{equation}\label{Finverse}
\mathcal{F}^{-1}_{k,n}f(x)=\mathcal{F}_{k,n}f((-1)^nx), \qquad x\in \mathbb{R}. 
\end{equation}
Regarding \eqref{Bkn}, while the even part of the kernel $B_{k,n}$ is provided by the normalized Bessel function $j_\alpha$ of index $\alpha$: 
\begin{equation}\label{j=J}j_\alpha(z)=2^\alpha \Gamma(\alpha+1)z^{-\alpha}J_\alpha(z)=\Gamma(\alpha+1)\sum_{k=0}^\infty \frac{(-1)^k}{k! \Gamma(\alpha+k+1)}\left(\frac{z}{2} \right)^{2k},
\end{equation}
the odd part can, after making an appropriate substitution, be expressed in terms of  
$u^n j_{\alpha+n}(u)$. This splitting has served to establish a product formula for $B_{k,n}$ in \cite{Boubatra_Negzaoui_Sifi}. Indeed it was shown that the product of two kernels can be written as an integral of the kernel with weighted measure involving Gegenbauer polynomials  of ordre $\alpha$ and degree $n$, 
\begin{equation}\label{Gegenbauer}C_n^{(\alpha)}(t)=\frac{1}{\Gamma(\alpha)}\sum_{k=0}^{\left[\frac{n}{2}\right] }(-1)^k \frac{\Gamma(n-k+\alpha)}{k!(n-2k)!}(2t)^{n-2k}.
\end{equation}
By virtue of \cite[$(3)$ p.48]{Wa}, for $\alpha>-\frac{1}{2}$, the normalized Bessel function $j_\alpha$ has the Poisson integral representation 
\begin{equation}\label{BesselPoisson} j_{\alpha}(x)=2\frac{\Gamma(\alpha+1)}{\sqrt{\pi}\Gamma(\alpha+\frac{1}{2})} \int_{0}^{1} (1-t^{2})^{\alpha-\frac{1}{2}} \cos(xt)dt.
\end{equation}
For the odd part, the Gegenbauer's generalisation of Poisson's integral, \cite[$(3)$, p. 50]{Wa}, states as
\begin{equation}
	\label{GegenbauerBesselPoisson}
	J_{\alpha+n}(z)=\frac{(-i)^n n!\Gamma(2\alpha)\left( \frac{1}{2}z\right)^\alpha}{\sqrt{\pi}\Gamma(\alpha+\frac{1}{2})\Gamma(2\alpha+n)} \int_{-1}^{1}C_n^{\alpha}(t)(1-t^2)^{\alpha-\frac{1}{2}}e^{izt}dt.
\end{equation} 
Or equivalently, 
\begin{equation}\label{unj}
u^n j_{\alpha+n }(u)=a_{\alpha,n} \int_{0}^{1}C_n^{(\alpha)}(t)(1-t^2)^{\alpha -\frac{1}{2}}\cos(ut+n\frac{\pi}{2})\, dt,
\end{equation}
where the constant $a_{\alpha,n}$ is given by \begin{equation}\label{aan}a_{\alpha,n}=\frac{2^{2\alpha+n}n!}{\pi}(\alpha+n)\beta(\alpha,\alpha+n),
\end{equation} and $C_n^{(\alpha)}$ is the Gegenbauer polynomial of ordre $\alpha$ and degree $n$ given by \eqref{Gegenbauer}. 

Denote $f_e$ and $f_o$ the even and the odd parts of $f$, respectively: $$\forall\, x\in\R,\qquad f_e(x)=\frac{f(x)+f(-x)}{2}\quad and \quad f_o(x)=\frac{f(x)-f(-x)}{2}.$$ Consider the transformations $\mathcal{T}_1 $ and $\mathcal{T}_2 $, which act on the space $L^1_{k,n}$ as follows:   
\begin{equation}\label{T1}
\mathcal{T}_1 f(z)=\int_{\mathbb{R}} f_e(u)j_{k n-\frac{n}{2}} (nz\vert u\vert^{\frac{1}{n}})d\mu_{k,n}(u),                                         \end{equation}
and \begin{equation}\label{T2}
\mathcal{T}_2 f(z)=\frac{(-i)^n \Gamma(k n-\frac{n}{2}+1)}{\Gamma(k n+\frac{n}{2}+1)}\left( \frac{n}{2}\right)^n \int_{\mathbb{R}}f_o(u)u z^n j_{k n+\frac{n}{2}} (nz\vert u\vert^{\frac{1}{n}})d\mu_{k,n}(u).\end{equation}
These transformations are closely related to  $\mathcal{F}_{k,n}$. Indeed, for all $x\in\R$,
 \begin{equation} \mathcal{F}_{k,n}f(x^n)=\mathcal{T}_1 f_e(x)+\mathcal{T}_2 f_o(x). 	\label{reel tildea} \end{equation}
 For these transformations, we have the following lemmas that ensure the analyticity of $\mathcal{T}_1f$ and $\mathcal{T}_2f$ in the complex plane $\C$. 
\begin{lem}\label{entire1}
 Let $p\in [1,+\infty]$, $ a>0$, and $f$ be a measurable function on $\mathbb{R}$ satisfying 
\begin{equation}
e^{na\vert x \vert^\frac{2}{n}}f\in L^p_{k,n}.
\label{Cond.1}
\end{equation}
Then the function $\mathcal{T}_1f$ is well-defined and entire on $\mathbb{C}$. Furthermore, $\mathcal{T}_1f$ 
satisfies the following inequality:
\begin{equation}
\forall z\in \C, \qquad \vert \mathcal{T}_1 f(z)\vert \leq C({k,n,p})\, e^{\frac{n}{4a}\Im(z)^2},
\label{Tineg}
\end{equation}
where  $C({k,n,p})$ is a positive constant depending only on $k$, $n$ and $p$.
\end{lem}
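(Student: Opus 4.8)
\emph{Reduction and entirety of the integrand.} Since the weight $e^{na|x|^{\frac2n}}$ and the map $f\mapsto f_e$ both respect the symmetry $x\mapsto-x$ and the measure $\mu_{k,n}$ is even, the hypothesis \eqref{Cond.1} passes to the even part: $e^{na|u|^{\frac2n}}f_e\in L^p_{k,n}$, with norm controlled by that of $e^{na|\cdot|^{\frac2n}}f$. Write $\alpha=kn-\frac n2$; the standing assumption $k\ge\frac{n-1}{2n}$ is exactly $\alpha\ge-\frac12$, which is what makes the Bessel estimates below available. Because $j_\alpha$ is even and entire, we may write $j_\alpha(w)=\phi(w^2)$ with $\phi$ entire, so that for each fixed $u$ the integrand $j_\alpha(nz|u|^{\frac1n})=\phi(n^2|u|^{\frac2n}z^2)$ in \eqref{T1} is an entire function of $z$ (no branch issues arise, precisely because only even powers of the argument occur). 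Thus $\mathcal T_1f$ is a superposition of entire functions, and the analyticity claim will follow once the defining integral is shown to converge locally uniformly in $z$.

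\emph{Key estimate.} I would control the kernel by the classical uniform bound for Bessel functions (cf.\ \cite{Wa}): for $\alpha\ge-\frac12$,
\begin{equation*}
|j_\alpha(w)|\le C_\alpha\,(1+|w|)^{-\alpha-\frac12}\,e^{|\Im w|},\qquad w\in\C .
\end{equation*}
The exponential factor already follows from the Poisson representation \eqref{BesselPoisson} via $|\cos\zeta|\le e^{|\Im\zeta|}$; the polynomial decay, which will turn out to be essential, comes from the large-argument asymptotics of $J_\alpha$. Writing $s=|u|^{\frac1n}$ and $y=|\Im(z)|$ and applying this with $w=nz|u|^{\frac1n}$, so that $|\Im w|=nys$ and $(1+n|z|s)^{-\alpha-\frac12}\le(1+nys)^{-\alpha-\frac12}$ by monotonicity, I obtain after inserting the factor $e^{na|u|^{\frac2n}}e^{-na|u|^{\frac2n}}$
\begin{equation*}
|\mathcal T_1f(z)|\le C\int_{\R}|f_e(u)|\,e^{na|u|^{\frac2n}}\,(1+nys)^{-\alpha-\frac12}\,e^{-na s^2+nys}\,d\mu_{k,n}(u).
\end{equation*}
Completing the square, $-na s^2+nys=-na\bigl(s-\tfrac{y}{2a}\bigr)^2+\tfrac{n y^2}{4a}$, pulls out exactly the desired Gaussian $e^{\frac{n}{4a}\Im(z)^2}$.

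\emph{Absorbing the weight (the crux).} It remains to bound the residual integral uniformly in $z$. Setting $g=e^{na|\cdot|^{\frac2n}}f_e\in L^p_{k,n}$ and $h_y(u)=(1+nys)^{-\alpha-\frac12}e^{-na(s-\frac{y}{2a})^2}$, H\"older's inequality gives $|\mathcal T_1f(z)|\le C\,e^{\frac{n}{4a}y^2}\|g\|_{L^p_{k,n}}\|h_y\|_{L^{p'}_{k,n}}$ with $p'$ the conjugate exponent, and I must show $\|h_y\|_{L^{p'}_{k,n}}$ is bounded uniformly in $y\ge0$ (for $p=1$ one uses instead $\|h_y\|_{L^\infty_{k,n}}\le1$, which is immediate). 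After the change of variables $s=|u|^{\frac1n}$, which turns $d\mu_{k,n}$ into a constant multiple of $s^{2\alpha+1}\,ds$ on $(0,\infty)$, this reduces to the uniform bound
\begin{equation*}
\sup_{y\ge0}\int_0^\infty (1+nys)^{-(\alpha+\frac12)p'}\,e^{-na p'(s-\frac{y}{2a})^2}\,s^{2\alpha+1}\,ds<\infty .
\end{equation*}
The Gaussian localizes the integral near the peak $s\approx\frac{y}{2a}$; there $s^{2\alpha+1}\sim(\tfrac{y}{2a})^{2\alpha+1}$ grows, but $(1+nys)^{-(\alpha+\frac12)p'}\sim(\tfrac{ny^2}{2a})^{-(\alpha+\frac12)p'}$ decays, and since $2\alpha+1=2(\alpha+\frac12)$ the two powers of $y$ cancel (for $p=\infty$) or leave a net decaying power (for $p<\infty$). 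I expect this peak analysis, carried out by splitting into the regions $s\le\frac{y}{4a}$, $|s-\frac{y}{2a}|\le\frac{y}{4a}$ and $s\ge\frac{3y}{4a}$ and treating small $y$ separately, to be the main obstacle: it is exactly where the sharp polynomial decay of the Bessel kernel is needed to cancel the polynomial growth of the weight $d\mu_{k,n}$. The naive bound $|j_\alpha(w)|\le e^{|\Im w|}$ would leave a spurious factor $(1+y)^{(2\alpha+1)/p'}$ and fail to produce the clean constant $\frac{n}{4a}$.

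\emph{Analyticity.} Finally, the same estimates, taken over a compact set $\{|\Im z|\le M\}$, furnish an integrable majorant independent of $z$, so that the integral converges locally uniformly; by Fubini's theorem together with Morera's theorem (or by differentiation under the integral sign) $\mathcal T_1f$ is entire, which completes the proof of both assertions.
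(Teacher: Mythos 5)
Your argument is correct, but the key estimate is obtained by a genuinely different mechanism from the paper's. You bound the kernel pointwise by the sharp uniform estimate $|j_\alpha(w)|\lesssim (1+|w|)^{-\alpha-\frac12}e^{|\Im w|}$ and then must verify, by a peak analysis near $s\approx \frac{y}{2a}$, that the polynomial decay of the kernel cancels the polynomial growth $s^{2\alpha+1}$ of the measure; you correctly observe that the crude bound $|j_\alpha(w)|\le e^{|\Im w|}$ applied directly to the kernel would leave a spurious power of $y$ and destroy the sharp constant $\frac{n}{4a}$. The paper avoids the sharp Bessel asymptotics altogether: it keeps the Poisson representation \eqref{BesselPoisson} explicit, substitutes inside it, and applies Fubini so that the exponential factor $e^{nu|\Im z|}$ ends up attached to an \emph{unweighted} variable $u$ (plain $du$); completing the square and applying H\"older there gives $\int_0^{+\infty}e^{-p'an(u-\frac{|\Im z|}{2a})^2}du\le\sqrt{\pi/(p'an)}$ uniformly in $z$, while the weight $t^{2\alpha+1}$ is reabsorbed into $\|e^{na|\cdot|^{2/n}}f\|_{L^p_{k,n}}$ through the identity $\int_0^t(t^2-u^2)^{\alpha-\frac12}du=c\,t^{2\alpha}$ and the closed-form computation \eqref{sup_int_u}. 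Your route is structurally shorter but imports (without proof) the uniform complex-argument decay of $J_\alpha$ and leaves the three-region peak analysis, which is the actual work, only sketched; the paper's route uses nothing beyond $|\cos\zeta|\le e^{|\Im\zeta|}$ and elementary integrals, at the cost of a double application of H\"older plus Fubini. Both correctly rely on $\alpha=kn-\frac n2\ge-\frac12$ and both produce the clean constant $\frac{n}{4a}$; the well-definedness and entirety part of your argument coincides with the paper's.
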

\begin{proof}
Consider  a function $f$ satisfying \eqref{Cond.1}. Note that the mapping $z\mapsto f_e(u) j_{k n-\frac{n}{2}} (nz\vert u\vert^{\frac{1}{n}})$ defines an entire function on $\mathbb{C}$. For all  $z\in \C$ and $t\in [0,1]$, we have:
\begin{eqnarray}\label{cos_ineg}
	\vert \cos(nz\vert u\vert^\frac{1}{n}t)\vert=\left\vert\frac{e^{inz\vert u\vert^\frac{1}{n}t}+e^{-inz\vert u\vert^\frac{1}{n}t}}{2}\right\vert	&\leq& e^{n\vert \Im(z)\vert \vert u\vert^\frac{1}{n}}.
\end{eqnarray}
Using the fact that $$\int_{0}^{1}(1-t^2)^{kn-\frac{n}{2}-\frac{1}{2}}dt=\beta(\frac{1}{2},kn-\frac{n}{2}+\frac{1}{2})= \frac{\sqrt{\pi}\Gamma(kn-\frac{n}{2}+\frac{1}{2})}{2\Gamma(kn-\frac{n}{2}+1)},$$
we can derive from \eqref{BesselPoisson} that
\begin{equation}\label{domt1}
 \vert j_{kn-\frac{n}{2}}(nz\vert u\vert^{\frac{1}{n}})\vert \leq e^{n\vert \Im(z)\vert\vert u\vert^{\frac{1}{n}}}.
\end{equation}
Thus, for all $R>0$ and for all $z\in\C$ with $|\Im(z)|\leq R$, we have $$\left|f_e(u)j_{k n-\frac{n}{2}} (nz\vert u\vert^{\frac{1}{n}})|u|^{2k+\frac{2}{n}-2}\right|\leq \varphi_R(u)= \left|f_e(u)e^{nR\vert u\vert^{\frac{1}{n}}}|u|^{2k+\frac{2}{n}-2}\right|. $$
It suffices to show that $\varphi_R$ belongs to $L^1(\R)$  in order to conclude that $\mathcal{T}_1f$ is well-defined and entire on  $\C$. This can be demonstrated using Hölder's inequality along with the condition \eqref{Cond.1} as follows:
$$\|\varphi_R\|_{L^1(\R)}=\int_{\mathbb{R}}\vert f_e(u)\vert e^{nR\vert u\vert^\frac{1}{n}}d\mu_{k,n}(u)\leq \Vert e^{n a\vert \, .\,\vert^\frac{2}{n}} f_e \Vert_{L_{k,n}^p} \Vert e^{-n a\vert\, .\,\vert^\frac{2}{n}+nR\vert \, .\,\vert^\frac{1}{n}} \Vert_{L_{k,n}^{p'}}<+\infty,$$
where $\frac{1}{p}+\frac{1}{p'}=1$.

Now, let's prove inequality \eqref{Tineg}.\\
Applying the Poisson integral representation \eqref{BesselPoisson} and performing a change of variables, we obtain for $\alpha=kn-\frac{n}{2}$, %\begin{equation}\mathcal{T}_1f(z)=\frac{2n}{\sqrt{\pi}\Gamma(\alpha+\frac{1}{2})}\left( \frac{n}{2}\right)^\alpha \int_{0}^{\infty}\int_0^t f_e(t^n)(t^2-u^2)^{\alpha-\frac{1}{2}}\cos(nuz) \, du\,t\, dt.\label{T_1fe_inq}\end{equation} 
%Fubini's theorem and equation \eqref{cos_ineg} yield 
\begin{equation}\label{T_1fe_inq2} 
		\vert \mathcal{T}_1f(z)\vert \leq\frac{2n}{\sqrt{\pi}\Gamma(\alpha+\frac{1}{2})}\left( \frac{n}{2}\right)^\alpha \int_{0}^{+\infty}\int_u^{+\infty} \vert f_e(t^n)\vert (t^2-u^2)^{\alpha-\frac{1}{2}} \; t\,dt\;e^{n u \vert \Im(z)\vert }\; du.
	\end{equation} 
Let's denote the integral
$$I(u)= \int_{u}^{+\infty}\vert f_e(t^n)\vert (t^2-u^2)^{\alpha-\frac{1}{2}}t\,dt.$$ Note that \begin{equation}\label{eg}
nu \vert \Im(z)\vert=\frac{n}{4a}\vert \Im(z)\vert^2+anu^2 -an(u-\frac{\vert\Im(z)\vert }{2a})^2,
\end{equation} 
then applying Hölder's inequality, we obtain for $\frac{1}{p}+\frac{1}{p'}=1$,
\begin{equation*}
	\vert \mathcal T_1 f(z)\vert \leq\frac{2n}{\sqrt{\pi}\Gamma(\alpha+\frac{1}{2})}\left(\frac{n}{2} \right)^\alpha
	e^{\frac{n}{4a}\Im(z)^2}	 \left( \int_{0}^{+\infty}e^{napu^2}\left[ I(u)\right]^p\;du\right)^\frac{1}{p} \left( \int_{0}^{+\infty} e^{-p'an(u-\frac{\vert\Im(z)\vert }{2a})^2}du\right)^{\frac{1}{p'}}.
\end{equation*}
Making a change of variable, one can easily find when $p'\in[1,+\infty)$, that
$$
	\int_{0}^{+\infty} e^{-p'an(u-\frac{\vert\Im(z)\vert }{2a})^2}du=	\int_{-\frac{\vert\Im(z)\vert}{2a}}^{+\infty} e^{-p'anx^2}dx\leq \int_{\R} e^{-p'anx^2}dx=\sqrt{\frac{\pi}{p'an}}.
$$
Consequently, 
\begin{equation}\label{T1fI}\vert \mathcal T_1 f(z)\vert \leq C(k,n,p) \, e^{\frac{n}{4a}\Im(z)^2} \left( \int_{0}^{+\infty}e^{napu^2}\left[ I(u)\right]^pdu\right)^\frac{1}{p}.
\end{equation}
H\"older's inequality leads to
$$I(u)\leq \left[ \int_{u}^{+\infty}e^{anpt^2}\vert f_e(t^n)\vert^p (t^2-u^2)^{\alpha-\frac{1}{2}}t\,dt\right]
 \left[ \int_{u}^{+\infty}e^{-anp't^2} (t^2-u^2)^{\alpha-\frac{1}{2}}t\,dt\right]^\frac{p}{p'}.$$
Making a change of variable allows us to compute the last integral as:
\begin{equation}
	\int_u^{+\infty} e^{-np'at^2} (t^2-u^2)^{\alpha-\frac{1}{2}} t\, dt=\frac{e^{-nap'u^2}}{2(anp')^{\alpha+\frac{1}{2}}}\int_0^{+\infty} e^{-x} x^{\alpha-\frac{1}{2}} dx=\frac{\Gamma(\alpha+\frac{1}{2})}{2(anp')^{\alpha+\frac{1}{2}}}e^{-anp'u^2},
	\label{sup_int_u}
\end{equation}
which gives 
$$\left[I(u)\right]^p\leq C \left[ \int_{u}^{+\infty}e^{anpt^2}\vert f_e(t^n)\vert^p (t^2-u^2)^{\alpha-\frac{1}{2}}t\,dt\right]
 \left[ e^{-anpu^2}\right].$$
Hence, by \eqref{T1fI} 
$$\vert \mathcal T_1 f(z)\vert^p \leq C(k,n,p,a) \, e^{\frac{np}{4a}\Im(z)^2}  \int_{0}^{+\infty}\left( \int_{u}^{+\infty}e^{anpt^2}\vert f_e(t^n)\vert^p (t^2-u^2)^{\alpha-\frac{1}{2}}t\,dt\right)du.$$
Using Fubini's theorem 
$$\vert \mathcal T_1 f(z)\vert^p \lesssim e^{\frac{np}{4a}\Im(z)^2}  \int_{0}^{+\infty}\left( \int_{0}^{t}(t^2-u^2)^{\alpha-\frac{1}{2}}du\right)  \vert f_e(t^n)\vert^p e^{anpt^2} t\,dt, $$ which leads to % An additional change of variable yields 
\begin{align*}
\vert \mathcal T_1 f(z)\vert^p \lesssim  e^{\frac{np}{4a}\Im(z)^2}  \int_{0}^{+\infty} \vert f_e(t^n)\vert^p e^{anpt^2} t^{2kn-n+1}dt.
\end{align*}
Finally, the fact that $$\int_{0}^{+\infty} \vert f_e(t^n)\vert^p e^{anpt^2} t^{2kn-n+1}dt\lesssim \left\| e^{an\vert .\vert^\frac{2}{n}}f\right\|_{L^p_{k,n}}$$ allows to deduce \eqref{Tineg} for $1<p<+\infty$.

\textbf{The case $p=1$}: It follows from \eqref{domt1} and 
\eqref{eg} that
\begin{align*}
	\vert \mathcal{T}_1f(z)\vert \lesssim \Vert e^{na\vert .\vert^{\frac{2}{n}}}f\Vert_{L^1_{k,n}}\;e^{\frac{n}{4a}\vert \Im(z)\vert^2}.
\end{align*}
\textbf{In the case $p=+\infty$}, we obtain:
	$$I(u)\leq \Vert e^{na\vert .\vert^{\frac{2}{n}}}f\Vert_{L^\infty_{k,n}}\int_u^{+\infty} e^{-nat^2} (t^2-u^2)^{\alpha-\frac{1}{2}} t\, dt$$
As we have \eqref{sup_int_u}, one can easily deduce  
\begin{equation*}
		I(u)\lesssim \Vert e^{na\vert .\vert^{\frac{2}{n}}}f\Vert_{L^\infty_{k,n}}
		 e^{-nau^2}	
\end{equation*}
Thus, \eqref{T_1fe_inq2} leads to
$$ \vert\mathcal{T}_1f_e(z)\vert\lesssim\Vert e^{na\vert .\vert^{\frac{2}{n}}}f\Vert_{L^\infty_{k,n}} e^{\frac{n}{4a}\vert \Im(z)\vert^2} \int_{0}^{+\infty}e^{-na(u-\frac{\vert \Im(z)\vert}{2a})^2} du.$$
Which proves the wanted inequality.
%\begin{equation}	\vert\mathcal{T}_1f_e(z)\vert\lesssim\Vert e^{na\vert .\vert^{\frac{2}{n}}}f\Vert_{L^\infty_{k,n}}\; e^{\frac{n}{4a}\vert \Im(z)\vert^2}.\end{equation}
\end{proof}
We proceed similarly to prove the result for the second transformation $\mathcal{T}_2$.
\begin{lem}\label{entire2}
 Let $p\in [1,+\infty]$ and $ a>0$. Consider $f$ a measurable function on $\mathbb{R}$ verifying relation \eqref{Cond.1}. Then the function $\mathcal{T}_2f$ is well-defined and entire on $\mathbb{C}$. Furthermore, $\mathcal{T}_2f$ 
satisfies the following inequality:
\begin{equation}
\forall z\in \C, \quad \vert \mathcal{T}_2 f(z)\vert \leq C({k,n,p})\, e^{\frac{n}{4a}\Im(z)^2},
\label{T2neg}
\end{equation}
where  $C({k,n,p})$ is a positive constant depending only on $k$ and $n$.
\end{lem}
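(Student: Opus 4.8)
The plan is to follow the template of Lemma~\ref{entire1} almost verbatim, the only genuinely new input being the Gegenbauer--Poisson representation \eqref{unj} in place of the plain Poisson formula \eqref{BesselPoisson}. The essential observation is that the awkward prefactor $uz^n$ appearing in the definition \eqref{T2} is precisely what is needed in order to apply \eqref{unj}. Indeed, writing $\alpha=kn-\frac{n}{2}$ and setting $w=nz\vert u\vert^{\frac1n}$, so that $w^n j_{\alpha+n}(w)=n^n z^n\vert u\vert\, j_{\alpha+n}(nz\vert u\vert^{\frac1n})$, the identity \eqref{unj} gives
$$u\,z^n j_{\alpha+n}(nz\vert u\vert^{\frac1n})=\frac{a_{\alpha,n}}{n^n}\,\operatorname{sgn}(u)\int_0^1 C_n^{(\alpha)}(t)(1-t^2)^{\alpha-\frac12}\cos\!\big(nz\vert u\vert^{\frac1n}t+n\tfrac{\pi}{2}\big)\,dt.$$
Two consequences are immediate: the non-holomorphic fractional power $\vert u\vert^{1/n}$ has been pushed inside a cosine, so that $z\mapsto u\,z^n j_{\alpha+n}(nz\vert u\vert^{1/n})$ is entire (it is $z^n$ times the even entire function $z\mapsto j_{\alpha+n}(nz\vert u\vert^{1/n})$, which depends analytically only on $z^2\vert u\vert^{2/n}$); and the factor $u/\vert u\vert=\operatorname{sgn}(u)$ turns the odd integrand $f_o(u)\,\operatorname{sgn}(u)$ into an even function whose modulus equals $\vert f_o(u)\vert$.

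For the analyticity, I would first record the pointwise estimate
$$\big\vert u\,z^n j_{\alpha+n}(nz\vert u\vert^{\frac1n})\big\vert\le C\, e^{n\vert\Im(z)\vert\,\vert u\vert^{\frac1n}},$$
which is the exact analogue of \eqref{domt1}: it follows from the displayed identity by bounding the Gegenbauer polynomial by its sup-norm $\Vert C_n^{(\alpha)}\Vert_{L^\infty([-1,1])}$, using that $\int_0^1(1-t^2)^{\alpha-\frac12}dt<\infty$ under the standing assumption on $k$, and estimating $\vert\cos(nz\vert u\vert^{\frac1n}t+n\frac{\pi}{2})\vert\le e^{n\vert\Im(z)\vert\vert u\vert^{\frac1n}}$, the phase shift $n\frac{\pi}{2}$ being harmless for this bound. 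Consequently, on every strip $\vert\Im(z)\vert\le R$ the integrand defining $\mathcal{T}_2 f$ is dominated by $C\vert f_o(u)\vert e^{nR\vert u\vert^{\frac1n}}\vert u\vert^{2k+\frac2n-2}$, which lies in $L^1(\R)$ by the same Hölder argument as in Lemma~\ref{entire1}, applying \eqref{Cond.1} to $f_o$. Morera's theorem (equivalently, differentiation under the integral sign) then shows that $\mathcal{T}_2 f$ is entire.

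For the growth bound \eqref{T2neg}, I would substitute the representation above into \eqref{T2}, bound $\vert C_n^{(\alpha)}(t)\vert$ and the cosine as in the preceding step, and thereby reduce the estimate to precisely the integral expression \eqref{T_1fe_inq2} of Lemma~\ref{entire1} with $\vert f_e\vert$ replaced by $\vert f_o\vert$ (which again satisfies \eqref{Cond.1}). From this point the argument is identical: the algebraic splitting \eqref{eg} extracts the factor $e^{\frac{n}{4a}\Im(z)^2}$ together with a Gaussian that integrates to a finite constant, and a further application of Hölder's inequality combined with the explicit integral \eqref{sup_int_u} and Fubini's theorem yields $\vert\mathcal{T}_2 f(z)\vert^p\lesssim e^{\frac{np}{4a}\Im(z)^2}\Vert e^{na\vert\cdot\vert^{2/n}}f\Vert_{L^p_{k,n}}^p$ for $1<p<\infty$, while the cases $p=1$ and $p=\infty$ follow directly from the pointwise bound exactly as before. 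The main (and essentially only) obstacle is this initial algebraic reduction: one must verify that the prefactor $uz^n$ is consumed exactly by $w^n$ in \eqref{unj}, so that both the non-holomorphic fractional power and the apparent singularity $1/\vert u\vert$ cancel simultaneously; once this is secured, the Gegenbauer weight and the phase shift are controlled by a trivial sup-norm bound, and the whole estimate collapses onto that of Lemma~\ref{entire1}.
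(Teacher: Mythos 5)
Your proposal is correct and follows essentially the same route as the paper: the paper likewise uses the Gegenbauer--Poisson representation \eqref{unj} together with the sup-norm bound on $C_n^{(\alpha)}$ and the estimate $\vert\cos(nz\vert u\vert^{1/n}t+n\frac{\pi}{2})\vert\le e^{n\vert\Im(z)\vert\vert u\vert^{1/n}}$ to obtain the analogue of \eqref{domt1}, deduces well-definedness and analyticity by domination on horizontal strips, and then reduces the growth estimate to the integral \eqref{T_1fe_inq2} with $f_o$ in place of $f_e$ (your $\operatorname{sgn}(u)$ observation is the paper's remark that $xf_o(x)$ is even). No gaps.
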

\begin{proof} 
%According to Lemma \ref{lemmaunj}, for $\alpha=kn-\frac{n}{2}$, we have $$n|u|z^n j_{\alpha+n }(nz|u|^\frac{1}{n})=a_{\alpha,n} \int_{0}^{1}C_n^{(\alpha)}(t)(1-t^2)^{\alpha -\frac{1}{2}}\cos(nz|u|^\frac{1}{n}t+n\frac{\pi}{2})\, dt.$$ 
Since $ \cos(nz\vert u\vert^\frac{1}{n}t+n\frac{\pi}{2})$ can be seen as $\pm\cos(nz\vert u\vert^\frac{1}{n}t)$ or  $\pm  \sin(nz\vert u\vert^\frac{1}{n}t)$, then in both cases, for all $z\in \C$, and for all $t\in [0,1]$, we have
\begin{equation}\label{cos2}\left\vert \cos(nz\vert u\vert^\frac{1}{n}t+n\frac{\pi}{2})\right\vert \leq e^{n\vert \Im(z)\vert \vert u\vert^\frac{1}{n}}.
\end{equation}
Note that, for $\alpha>-\frac{1}{2}$, the Gegenbauer polynomials $C^{(\alpha)}_n$ are bounded on $[0,1]$, which leads to the following upper bound
\begin{equation}\label{domt2}
\vert u z^n j_{k n+\frac{n}{2}} (nz\vert u\vert^{\frac{1}{n}}) \vert \leq  C(k,n)\, e^{n\vert \Im(z)\vert \vert u\vert^\frac{1}{n}},
\end{equation} where $C(k,n)$ denotes a constant depending on $k$ and $n$.  Hence, if we consider $R>0$ then, for all $z\in\C$ with $|\Im(z)|\leq R$, $$\left|f_o(u)nu z^n j_{k n+\frac{n}{2}} (nz\vert u\vert^{\frac{1}{n}})\vert|u|^{2k+\frac{2}{n}-2}\right|\leq \psi_R(u)= C(k,n)\left|f_o(u)e^{nR\vert u\vert^{\frac{1}{n}}}|u|^{2k+\frac{2}{n}-2}\right|. $$ 
Similar argument as for $\varphi_R$ ensures that $\psi_R$ belongs to $L^1(\R)$, together with the fact that the mapping $z\longmapsto f_o(u)u z^n j_{k n+\frac{n}{2}} (nz\vert u\vert^{\frac{1}{n}})$ is an entire function on $\mathbb{C}$, prove that $\mathcal{T}_2f_0$ is well defined and entire on $\C$.

To prove \eqref{T2neg}, note that $xf_o(x)$ is an even function. Then %a change of variable and Lemma \ref{lemmaunj} lead to
$$ \mathcal{T}_2f(z)=C(k,n) \int_{0}^{\infty}f_o(t^n)\int_0^1 C_n^{kn-\frac{n}{2}}(s)(1-s^2)^{kn-\frac{n}{2}-\frac{1}{2}}\cos(nzts+n\frac{\pi}{2})ds\, t^{2kn-n+1} dt.$$ Therefore, applying a change of variable, 
$$ \left\vert \mathcal{T}_2f(z)\right\vert\lesssim \int_{0}^{\infty}|f_o(t^n)|\int_0^t (t^2-u^2)^{kn-\frac{n}{2}-\frac{1}{2}}|\cos(nzu+n\frac{\pi}{2})|du\, t\,dt.$$ 
Fubini's theorem, and inequality \eqref{cos2}, we obtain:
\begin{equation}
\vert	\mathcal{T}_2f(z)\vert \lesssim \int_{0}^{\infty} \int_u^{+\infty}\vert f_o(t^n)\vert (t^2-u^2)^{kn-\frac{n}{2}-\frac{1}{2}}\; t\,dt\;e^{n\vert u\vert \vert \Im(z)\vert} \; du.
\label{T_2eq}
\end{equation} 
Here we recognize similar integral as provided in the inequality \eqref{T_1fe_inq2}, the only difference resides in considering $f_o$ instead of $f_e$. Since, for all $p\in [1,+\infty]$, 
 $\Vert e^{an\vert.\vert^\frac{2}{n} }f_o\Vert_{L^p_{k,n}}\leq \Vert e^{an\vert.\vert^\frac{2}{n} }f\Vert_{L^p_{k,n}},$
similar discussions as in the proof of Lemma \ref{entire1} allows to derive \eqref{T2neg}
%$$ \vert \mathcal{T}_2f_o(z)\vert \lesssim \Vert e^{an\vert.\vert^\frac{2}{n} }f\Vert_{L^p_{k,n}}\; e^{\frac{n}{4a}\vert \Im(z)\vert^2}.$$ 
 \end{proof}
\section{Hardy theorem}
Before adressing the proof of our main result, we recall the Phragm\'en-Lindel\"of type lemma (cf. \cite{GoTr, HaKaNe}). 
\begin{lem}[Phragm\'en-Lindel\"of]
Let $p\in[1,+\infty]$ and $h$ be an entiere function on $\mathbb{C}$. We assume that
	\begin{equation*}
\forall z\in\C, \qquad	\vert h(z)\vert \leq C\, e^{a\Re(z)^2},
	\end{equation*} and
	\begin{equation*}
		\Vert h_{|\mathbb{R}}\Vert_{L_{\alpha}^p}=\left( \int_{\mathbb{R}}\vert h(x)\vert^p \vert x\vert^{2\alpha+1} dx \right)^{\frac{1}{p}}<+\infty,
	\end{equation*}
where $C$ and $a$ are positive constants.\\
Then $\quad h\equiv 0$ if $p\geq 1$, 
 and $ h$ is a constant on $\mathbb{C}$ if $p=+\infty$.
	\label{lem3.3}
\end{lem}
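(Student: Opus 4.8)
The plan is to reduce to the model case $p=+\infty$ and then let the integrability weight do the rest. I first record the two boundary facts that drive everything. Evaluating the growth hypothesis on the imaginary axis, where $\Re z=0$, gives $|h(iy)|\le C$, so $h$ is bounded on $i\R$; on the real axis the hypothesis $\|h_{|\R}\|_{L^p_\alpha}<\infty$ controls $h$ (an $L^\infty$ bound when $p=+\infty$, and otherwise smallness of $h$ along $\R$ in the averaged sense). Thus $h$ is entire of order at most $2$, controlled on both coordinate axes while being allowed to grow like $e^{a(\Re z)^2}$ in between. The counterexample $e^{z^2}$, which satisfies the growth bound but is unbounded on $\R$, shows that the real-axis condition is indispensable.

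The core is a Phragm\'en--Lindel\"of argument carried out quadrant by quadrant on the auxiliary entire functions
\[
g_{\pm}(z)=h(z)\,e^{\pm i a z^{2}}.
\]
Since $|e^{\pm iaz^{2}}|=e^{\mp 2a\,\Re z\,\Im z}$, one finds in the first quadrant $|g_{+}(z)|\le C\,e^{a[(\Re z-\Im z)^{2}-(\Im z)^{2}]}$, so $g_{+}$ is bounded by $C$ on both bounding rays: on the positive real axis by the real-axis control, and on the positive imaginary axis because there $\Re z\,\Im z=0$. I would then split $Q_{1}$ along the ray $\arg z=\arctan\tfrac12$: beyond it the exponent is negative and $g_{+}$ decays, while on the near-real side the growth is of order $2$ but the opening is strictly less than $\pi/2$, which makes order $2$ \emph{subcritical}, so Phragm\'en--Lindel\"of applies and gives $|g_{+}|\le C$ throughout $Q_{1}$. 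Running the same scheme for $g_{+}$ in $Q_{3}$ and for $g_{-}$ in $Q_{2},Q_{4}$ yields $|g_{+}|\le C$ on $\{\Re z\,\Im z\ge 0\}$ and $|g_{-}|\le C$ on $\{\Re z\,\Im z\le 0\}$; keeping the better bound on each region produces the reinforced global estimate $|h(z)|\le C\,e^{2a|\Re z\,\Im z|}$, whose growth indicator now vanishes on both axes.

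The main obstacle is precisely this quadrant step: the hypothesis permits growth of order exactly $2$, which is the \emph{critical} order for a sector of opening $\pi/2$, so the naive Phragm\'en--Lindel\"of principle fails and one cannot bound $g_{\pm}$ by its boundary values outright. I expect to resolve it by the regularization $g_{\pm,\varepsilon}(z)=g_{\pm}(z)\,e^{-\varepsilon e^{i\gamma}z^{2}}$ for a suitable fixed rotation $\gamma$ (equivalently, by the sub-sector splitting just described), which renders the growth strictly subcritical; applying the standard principle to $g_{\pm,\varepsilon}$ and then letting $\varepsilon\to 0^{+}$ recovers the uniform bound on $g_{\pm}$. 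This is the step I expect to require the most care, since it is exactly where the borderline nature of the estimate and the role of the real-axis hypothesis meet.

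To finish, I would transport the reinforced bound through $w=z^{2}$ applied to the even and odd parts, writing $h_{e}(z)=\Phi(z^{2})$ and $h_{o}(z)=z\,\Psi(z^{2})$ with $\Phi,\Psi$ entire. Using $\Re(z^{2})=(\Re z)^{2}-(\Im z)^{2}$ and $2\Re z\,\Im z=\Im(z^{2})$, both the original bound $e^{a(\Re z)^{2}}$ and the reinforced bound $e^{2a|\Re z\,\Im z|}$ turn into estimates showing $\Phi$ and $\Psi$ are of exponential type, bounded on $\R$, with indicator vanishing to high order in the two real directions; a now order-one Phragm\'en--Lindel\"of argument forces them to be of minimal type and hence, being bounded on $\R$, bounded on $\C$, so constant by Liouville. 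This gives that $h$ is constant, which is the conclusion for $p=+\infty$. For $1\le p<+\infty$ the same quadrant scheme applies with the $L^{p}_{\alpha}$ datum in place of the pointwise real-axis bound, and once $h$ is shown to be constant the finiteness of $\int_{\R}|h|^{p}|x|^{2\alpha+1}\,dx$ forces that constant to be $0$, i.e. $h\equiv 0$.
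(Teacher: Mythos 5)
The paper does not actually prove this lemma: it is recalled from \cite{GoTr,HaKaNe} without proof, so there is no in-paper argument to compare yours against and I can only assess your proposal on its merits. Your case $p=+\infty$ is essentially the standard and correct route: $g_{\pm}(z)=h(z)e^{\pm iaz^{2}}$ is bounded on the rays bounding each quadrant, the splitting along $\arg z=\arctan\frac12$ correctly defuses the critical order-$2$/opening-$\frac{\pi}{2}$ obstruction, and the reinforced bound $|h(z)|\le C e^{2a|\Re z\,\Im z|}$ together with $w=z^{2}$ and an indicator (trigonometric convexity) argument does force $h_{e}$ to be constant and $h_{o}$ to vanish. One point you should make explicit: the indicator of $\Phi$ vanishes to \emph{second} order only in the direction $\arg w=\pi$ (from $a\cos^{2}(\phi/2)$); in the direction $\arg w=0$ the growth bounds give only first-order vanishing, which would not suffice (cf.\ $e^{-iaw}$), and there you must instead use the boundedness of $\Phi$ on $(0,+\infty)$ coming from the real-axis hypothesis, then run the convexity estimate on $[0,\pi-\delta]$ and let $\delta\to0$.

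The genuine gap is the case $1\le p<+\infty$, which is the part of the lemma actually used to prove Theorem \ref{Hardyth}(1). You assert that ``the same quadrant scheme applies with the $L^{p}_{\alpha}$ datum in place of the pointwise real-axis bound,'' but the Phragm\'en--Lindel\"of principle on a sector requires a \emph{pointwise} bound on the bounding rays, and $\int_{\R}|h(x)|^{p}|x|^{2\alpha+1}dx<+\infty$ gives no pointwise control of $h$ on $\R$. The very first step of your quadrant argument (bounding $g_{+}$ on the positive real axis) is therefore unavailable, and nothing in your sketch replaces it. A standard repair is to reduce to the case you have already handled: for $r>0$ set $G_{r}(z)=\int_{-r}^{r}h(z+t)\,dt$. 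H\"older's inequality together with $|s|^{2\alpha+1}\ge 1$ for $|s|\ge 1$ (note $\alpha=kn-\frac{n}{2}\ge-\frac12$) shows $G_{r}$ is bounded on $\R$, while $|G_{r}(z)|\le C_{r}e^{a'(\Re z)^{2}}$ for any $a'>a$, which is harmless since the conclusion does not depend on the value of $a$. The $p=+\infty$ case then makes each $G_{r}$ constant; from $G_{r}(z)-G_{r'}(z)=c(r)-c(r')$ one gets that $h(z+r)+h(z-r)$ is independent of $z$, and $r=0$ gives $h\equiv\mathrm{const}$; finally $\int_{\R}|x|^{2\alpha+1}dx=+\infty$ forces that constant to be $0$. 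Without some such reduction (or an $L^{p}$ Phragm\'en--Lindel\"of theorem, which is not ``the same scheme''), your argument does not establish the $p<+\infty$ conclusion.
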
  
\begin{proof}[Proof of Theorem \ref{Hardyth}]
Consider the functions $h_1$ and $h_2$, defined by:
\begin{equation}\label{h1h2}h_1(z)= e^{\frac{n}{4a}z^2}{\mathcal{T}}_1f(z),\quad and \qquad h_2(z)=e^{\frac{n}{4a}z^2}\mathcal{T}_2f(z).
\end{equation}
One can see, according to Lemma \ref{entire1} and Lemma \ref{entire2}, that $h_1$ and $h_2$ are entire functions on $\C$. Moreover from \eqref{Tineg} and \eqref{T2neg}, we have:
\begin{equation}
	\vert h_l(z)\vert \lesssim e^{\frac{n}{4a}\Re(z)^2}, \quad l=1,2. \label{h1_inq}
\end{equation} 
%Let . 
On the other hand, for $x\in\R$, we have $\mathcal{T}_1f(x)={\mathcal{F}}_{k,n}f_e(x^n)=\frac{1}{2}(\mathcal{F}_{k,n}f(x^n)+\mathcal{F}_{k,n}f(-x^n))$ and $\mathcal{T}_2f(x)={\mathcal{F}}_{k,n}f_o(x^n)=\frac{1}{2}(\mathcal{F}_{k,n}f(x^n)-\mathcal{F}_{k,n}f(-x^n))$. Condition \eqref{C2} implies that $$e^{nbx^2}\left| \mathcal{F}_{k,n}f(x^n)\right|\leq  C\quad and \quad e^{nbx^2}\left| \mathcal{F}_{k,n}f(-x^n)\right|\leq  C.$$ Hence $$\left|e^{nbx^2} \mathcal{T}_lf(x)\right|\leq C,\quad l=1,2.$$
Consequently, for $p\in[1,+\infty)$ and $ab>\frac{1}{4}$,
\begin{equation*}
\int_{\mathbb{R}}\vert h_l(x)\vert^p \vert x\vert^{2kn-n+1} dx\lesssim \int_{\R} e^{-np(b-\frac{1}{4a})x^2} \vert x\vert^{2kn-n+1}dx<+\infty.
\end{equation*}
Lemma \ref{lem3.3} implies that $h_1$ and $h_2$ are identically zero. 
That is ${\mathcal{F}}_{k,n}f_e(.^n)=0$ and ${\mathcal{F}}_{k,n}f_o(.^n)=0$.
Specifically, for all $x\in(0,+\infty)$,
$${\mathcal{F}}_{k,n}f_e(x^n)=0 \quad and \quad {\mathcal{F}}_{k,n}f_o(x^n)=0$$
As the mapping $x\mapsto x^n$ establishes a bijection from $\R_+$ into itself, it follows that for all $x\in(0,+\infty)$,
$${\mathcal{F}}_{k,n}f_e(x)=0\quad and \quad {\mathcal{F}}_{k,n}f_o(x)=0.$$ Since \begin{equation}\label{FeFo}{\mathcal{F}}_{k,n}f_e=\left( {\mathcal{F}}_{k,n}f\right) _e\quad and \quad {\mathcal{F}}_{k,n}f_o=\left( {\mathcal{F}}_{k,n}f\right)_o,                                            \end{equation} 
we deduce that for all $x\in\R$,
$$\left( {\mathcal{F}}_{k,n}f\right) _e(x)=0 \quad and \quad \left( {\mathcal{F}}_{k,n}f\right) _o(x)=0.$$
Therefore $$ {\mathcal{F}}_{k,n}f =0,$$
which leads to $f=0$ a.e.

{2. The case $a.b=\frac{1}{4}$.} The inequalities \eqref{h1_inq} remain valid.
In accordance with condition \eqref{C2}, we obtain
 $$\vert h_1(x)\vert= \vert{\mathcal{F}}_{k,n}f_e(x^n)e^{\frac{n}{4a}x^2}\vert\leq C \quad\text{ and }\quad\vert h_2(x)\vert= \vert{\mathcal{F}}_{k,n}f_o(x^n)e^{\frac{n}{4a}x^2}\vert\leq C$$
 Applying Lemma \ref{lem3.3}, we conclude that:
 $$ h_1(z)=\lambda_1 \quad\text{ and }\qquad h_2(z)=\lambda_2,$$
 where $\lambda_1,\,\lambda_2\in \C$.
 Thus, 
  $$ \mathcal{T}_1f(z)=\lambda_1 e^{-nbz^2}\quad\text{ and }\qquad \mathcal{T}_2f(z)=\lambda_2e^{-nbz^2}.$$
In particular, for all $x\in\mathbb{R}$,
 \begin{equation}\label{last}\left( \mathcal{F}_{k,n}f \right) _e(x^n)={\mathcal{F}}_{k,n}f_e(x^n)=\lambda_1 e^{-nbx^2}\quad 
  \text{and }\left( {\mathcal{F}}_{k,n}f \right)_o(x^n)={\mathcal{F}}_{k,n}f_o(x^n)=\lambda_2e^{-nbx^2}.
\end{equation}
Note that if $n$ is an odd integer then ${\mathcal{F}}_{k,n}f_o(x^n)$ becomes an odd function and the equality \eqref{last} holds only when $\lambda_2=0$. Consequently,
$$\forall\, x\in \R,\qquad{\mathcal{F}}_{k,n}f(x^n)=\lambda_1 e^{-nb|x|^2}.$$
Using that the mapping $x\mapsto x^n$  establishes a bijection from $\R$ into itself for $n$ odd integer,
\begin{equation}\label{oddab=1/4}\mathcal{F}_{k,n}f(x)=\lambda_1 e^{-nb\vert x\vert^\frac{2}{n}}.
\end{equation}
 By inverting \eqref{oddab=1/4} and from relation \eqref{FGaussian}, we obtain
 $$f(x)= C\, e^{-na\vert x\vert^{\frac{2}{n}}} \quad a.e.$$ 
 {If $n$ is even integer}, \eqref{last} still true for $x>0$, which yield  
$$\left( \mathcal{F}_{k,n}f \right) _e(x)=\lambda_1 e^{-nb\vert x\vert^\frac{2}{n}}\quad
\text{ and }\left( {\mathcal{F}}_{k,n}f \right)_o(x)=\lambda_2e^{-nb\vert x\vert^\frac{2}{n}}.$$
Thus, for all $x \in \mathbb{R}$, we derive that
$$\mathcal{F}_{k,n}f (x)={\mathcal{F}}_{k,n}f_e(x)+{\mathcal{F}}_{k,n}f_o(x)=(\lambda_1+sgn(x)\lambda_2)e^{-nb\vert x\vert^\frac{2}{n}}.$$
By applying the inverse formula, we obtain
$$f_e(x)= C\, e^{-na\vert x\vert^{\frac{2}{n}}} \quad a.e.$$ 
and $$
f_o(x) =\lambda_2\, \frac{(-i)^n n}{\Gamma(\alpha+n+1)}\left(\frac{n}{2} \right)^{\alpha+n}\int_{0}^{+\infty}e^{-nbu^2} xu^n j_{\alpha+n}(nu\vert x\vert^{\frac{1}{n}}u) u^{2\alpha+1}du.$$
Using formula \cite[p.394]{Wa}, we get
$$ f_o(x)=\lambda_2x\frac{(-i)^n \Gamma(kn+1) n^{kn+\frac{n}{2}}}{2(nb)^{\frac{1}{2}(kn-\frac{n}{2}+2)}\Gamma(kn+\frac{n}{2}+1)}\left(\sqrt{na}\right)^{kn+\frac{n}{2}} e^{-na\vert x\vert^{\frac{2}{n}}}\, _1F_1(\frac{n}{2};kn+\frac{n}{2}+1; na\vert x\vert^{\frac{2}{n}}),$$
where $_1F_1$ is the confluent hypergeometric function. Since $\frac{n}{2}$, $kn+\frac{n}{2}+1$ and $na\vert x\vert^{\frac{2}{n}}$ are positive, it follows that $$_1F_1(\frac{n}{2};kn+\frac{n}{2}+1; na\vert x\vert^{\frac{2}{n}})\geq \; _1F_1(\frac{n}{2};kn+\frac{n}{2}+1; 0)= 1.$$ This implies that $f_o$ satisfies condition \eqref{C1} of Hardy's theorem if and only if $\lambda_2=0$.
\\ Consequently, we conclude that when $ab=\frac{1}{4}$,  the only functions that can be controlled by a Gaussian-type function, along with their  $(k,\frac{2}{n})-$Fourier transform, are those of the form
$f(x)=C\, e^{-na|x|^\frac{2}{n}}$.\\
 3.  When $ab<\frac{1}{4}$, we take $a<\delta<\frac{1}{4b}$ and we consider the family of functions $f_\delta(x)=e^{-\delta n\vert x\vert^{\frac{2}{n}}} $. These functions satisfy the conditions \eqref{C1} and \eqref{C2}.
\end{proof}
 
\section{A dynamical version of Hardy's uncertainty principle}

As in the classical case, we introduce a Sobolev-type space to utilize the $(k,\frac{2}{n})$-Fourier transform in solving \eqref{heateq}. Let $W_{k,n}^2$ denote the Sobolev space constructed via the operator $|x|^{2-\frac{2}{n}}\Delta_k$, defined as the subspace of $L^2_{k,n}$ such that $|x|^{2-\frac{2}{n}}\Delta_{k}f\in L^2_{k,n}$. $$W_{k,n}^2 = \left\{ f \in L^2_{k,n} \;:\; |x|^{2-\frac{2}{n}}\, \Delta_k f \in L^2_{k,n} \right\}.$$ Notably, the operator $|x|^{2-\frac{2}{n}}\Delta_k$ can be introduced in $L^2_{k,n}$ under the condition $2k + \frac{2}{n} - 2 > 0$, by
\begin{equation}
	\mathcal{F}_{k,n} \big( \vert x\vert^{2-\frac{2}{n}} \Delta_k \big) = -\vert x\vert^\frac{2}{n} \circ \mathcal{F}_{k,n}.
	\label{DeltaF}
\end{equation}
This property, along with others arising from the representation-theoretic construction of $\mathcal{F}_{k,n}$, can be found in \cite[Theorem 5.6]{BKO}. Hence, applying $\mathcal{F}_{k,n}$ to the heat equation \eqref{heateq}, we obtain that $ \mathcal{F}_{k,n}(u_t)$ satisfies the one order differential equation:
\begin{equation}
 		\label{2}
 		\partial_t \mathcal{F}_{k,n}(u_t)(\xi)=-n\vert \xi\vert^\frac{2}{n}\mathcal{F}_{k,n}(u_t)(\xi).
 	\end{equation}
If we consider the initial condition $$u_0(x)=u(0,x)\in L^2_{k,n}(\mathbb{R}),$$ then we get \begin{equation}
\mathcal{F}_{k,n}(u_t)(\xi)=e^{-n\vert \xi\vert^\frac{2}{n}t}\mathcal{F}_{k,n}u_0(\xi).
\label{F_ut}
\end{equation}
Invoking relation \eqref{FGaussian}, we assert, for $t>0$, that 
 	$$\mathcal{F}_{k,n}\left(\left(\frac{1}{2t} \right)^{k n-\frac{n}{2}+1}e^{-\frac{n}{4t}\vert .\vert^\frac{2}{n}}  \right)(\xi) =e^{-n\vert \xi\vert^\frac{2}{n}t}.$$
Consequently using the convolution structure, studied in \cite{BeNe, BeNe2}, which holds for $f\in L^1_{k,n}(\mathbb{R})$ and $g\in L^2_{k,n}(\mathbb{R})$, as
\begin{equation*}
	\mathcal{F}_{k,n}\left( f\star_{k,n}g \right) =\mathcal{F}_{k,n}\left( f\right)\mathcal{F}_{k,n}\left( g \right),
\end{equation*}
we infer $$\mathcal{F}_{k,n}(u_t)(\xi)=\mathcal{F}_{k,n}\left(\left(\frac{1}{2t} \right)^{k n-\frac{n}{2}+1}e^{-\frac{n}{4t}\vert x\vert^\frac{2}{n}}  \star_{k,n}u_0\right)(\xi).$$
Finally, by applying the inversion formula \eqref{Finverse}, we deduce that a solution of the heat equation \eqref{heateq} takes the form
\begin{equation}
u(t,x)=\left(\frac{1}{2t} \right)^{k n-\frac{n}{2}+1}e^{-\frac{n}{4t}\vert x\vert^\frac{2}{n}}  \star_{k,n}u_0((-1)^n x),\quad t>0.
\end{equation}

The dynamical version of Hardy theorem invoking heat operator states as follows. 
\begin{thm}\label{DHardyth}
 	Let $u\in C^1([0,T],W^2_{k,n})$  be a solution of the heat equation 
 	 \begin{equation}\label{1}
 		\begin{cases}
 			H_{k,n}u(t,x)=0\\
 			u_0(x)=u(0,x)\in L^2_{k,n}(\mathbb{R}), \qquad (t,x)\in [0,T]\times\mathbb{R}.
 		\end{cases}
 	\end{equation}
Suppose that $u_0(x)\in L^1_{k,n}(\mathbb{R})$ and 
\begin{eqnarray}\label{u_T}
\vert u(T,x)\vert \leq C e^{-n\delta\vert x\vert^{\frac{2}{n}}}.
\end{eqnarray} Then : 
if $\delta\geq \frac{1}{4T}$ then $u=0$.
\end{thm}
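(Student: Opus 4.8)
The plan is to deduce the statement from the static result, Theorem \ref{Hardyth}, by taking the profile at the final time, $g:=u(T,\cdot)$, as the test function. The hypothesis \eqref{u_T} already supplies one Gaussian bound, $|g(x)|\le C\,e^{-n\delta|x|^{2/n}}$, so $g$ satisfies condition \eqref{C1} with parameter $a=\delta$. What remains is to manufacture a matching bound of type \eqref{C2} on $\mathcal{F}_{k,n}g$, and this is where the regularity of $u$ and the integrability of $u_0$ enter.

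First I would record the frequency-side representation of the flow. Since $u\in C^1([0,T],W^2_{k,n})$ with $u_0\in L^2_{k,n}$, relation \eqref{F_ut} gives, a.e.\ in $\xi$,
\begin{equation*}
\mathcal{F}_{k,n}g(\xi)=\mathcal{F}_{k,n}(u_T)(\xi)=e^{-nT|\xi|^{2/n}}\,\mathcal{F}_{k,n}u_0(\xi).
\end{equation*}
Because $u_0\in L^1_{k,n}$, the boundedness estimate \eqref{Fourier borne} yields $\|\mathcal{F}_{k,n}u_0\|_{L^\infty_{k,n}}\le C(k,n)\|u_0\|_{L^1_{k,n}}$, whence
\begin{equation*}
|\mathcal{F}_{k,n}g(\xi)|\le C(k,n)\|u_0\|_{L^1_{k,n}}\,e^{-nT|\xi|^{2/n}}.
\end{equation*}
Thus $g$ also satisfies \eqref{C2} with $b=T$, and the governing product is $ab=\delta T$.

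Under the assumption $\delta\ge\frac1{4T}$ one has $ab=\delta T\ge\frac14$, so Theorem \ref{Hardyth} applies to $g$. If $\delta>\frac1{4T}$, i.e.\ $ab>\frac14$, part (1) forces $g\equiv 0$; then the displayed identity reads $e^{-nT|\xi|^{2/n}}\mathcal{F}_{k,n}u_0(\xi)=0$, and since the exponential never vanishes we obtain $\mathcal{F}_{k,n}u_0=0$, hence $u_0=0$ by injectivity of the unitary map $\mathcal{F}_{k,n}$ on $L^2_{k,n}$. Feeding $u_0=0$ back into \eqref{F_ut} gives $\mathcal{F}_{k,n}(u_t)=0$ for every $t\in[0,T]$, so $u\equiv 0$.

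I expect the boundary case $\delta=\frac1{4T}$ to be the main obstacle, since there $ab=\frac14$ and Theorem \ref{Hardyth}(2) only yields $g(x)=\lambda\,e^{-n\delta|x|^{2/n}}$, a priori with $\lambda\neq 0$. To rule this out I would apply $\mathcal{F}_{k,n}$ to $g$ and invoke the spectral identity \eqref{FGaussian}: as $\tfrac1{4\delta}=T$, it gives $\mathcal{F}_{k,n}g(\xi)=\lambda(2\delta)^{-(kn-n/2+1)}e^{-nT|\xi|^{2/n}}$. Comparing with the representation above and cancelling the nowhere-vanishing factor $e^{-nT|\xi|^{2/n}}$ shows that $\mathcal{F}_{k,n}u_0$ is a.e.\ equal to the constant $\lambda(2\delta)^{-(kn-n/2+1)}$. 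But $\mathcal{F}_{k,n}u_0\in L^2_{k,n}$ by unitarity, while the total mass $\mu_{k,n}(\mathbb{R})$ is infinite, so the only constant lying in $L^2_{k,n}$ is $0$; hence $\lambda=0$, $g\equiv 0$, and one concludes exactly as before that $u_0=0$ and $u\equiv 0$. The only points demanding care are the a.e.\ interpretation of the $L^2$ identity \eqref{F_ut} so that the pointwise Gaussian bounds may legitimately be read off, and the (elementary) verification that nonzero constants are not square-integrable for $\mu_{k,n}$.
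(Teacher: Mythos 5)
Your argument is correct and coincides with the paper's own proof: both apply the static Theorem \ref{Hardyth} to $u_T$, obtaining condition \eqref{C2} with $b=T$ from \eqref{F_ut} together with \eqref{Fourier borne}, and in the critical case $\delta=\frac{1}{4T}$ both deduce that $\mathcal{F}_{k,n}u_0$ is a nonzero constant unless $\lambda=0$, which is incompatible with $\mathcal{F}_{k,n}u_0\in L^2_{k,n}$. Your write-up is if anything slightly more explicit (e.g.\ noting that $\mu_{k,n}(\mathbb{R})=+\infty$ is what excludes nonzero constants), but the method is identical.
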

\begin{proof} 
To enhance readability, we denote by $u_t$ the function $ x\longmapsto u(t,x)$. \\
Note that \eqref{u_T} ensures that $u_T$ satisfies the first condition \eqref{C1} of Hardy's theorem. Moreover, combining equation \eqref{F_ut} with inequality \eqref{Fourier borne}, we obtain
$$\mathcal{F}_{k,n}(u_T)(\xi)\leq C\,\Vert u_0\Vert_{L^1_{k,n}}e^{-n\vert \xi\vert^\frac{2}{n}T}.$$
Hence the function $u_T$ satisfies \eqref{C2}, which implies, due to Theorem \ref{Hardyth}, that
 { if $\delta >\frac{1}{4T}$ } then $u_T=0$. By examining equation \eqref{F_ut}, we deduce that the cancellation of $u_T$ at time $T$ means the initial function $u_0$ must also be zero. Thus, this condition propagates to all times, which implies that $u_t=0$  for all $t\in[0,T].$
 
On the other hand, when $\delta=\frac{1}{4T}$, we find that $$u_T(x)=\lambda e^{-n\delta\vert x\vert^\frac{2}{n}},$$  and 
$$\mathcal{F}_{k,n}u_T(x)=\lambda\,(2T)^{k n-\frac{n}{2}+1}e^{-nT|x|^\frac{2}{n}}.$$
\eqref{F_ut} allows us to determine that 
 	$$\mathcal{F}_{k,n}u_0(x)=\lambda\,(2T)^{k n-\frac{n}{2}+1}.$$
Since $u_0$ belongs to $L^2_{k,n}$, by Plancherel formula, it will be the same for $\mathcal{F}_{k,n}u_0$, which follows that $\lambda=0$. Thus $\mathcal{F}_{k,n}u_0=0$ and $u_0=0 \; a.e.$
 \end{proof}
% \begin{thm}Theorem \ref{Hardyth}  is equivalent to Theorem  \ref{DHardyth}. \end{thm}
\begin{rem}
In the proof of Theorem \ref{DHardyth}, it is clear that Hardy's Theorem \ref{Hardyth} implies Theorem \ref{DHardyth}. Reversely, 
let $f$ be a function that satisfies the conditions \eqref{C1} and \eqref{C2} of Hardy's theorem. Consider the function $u(t,x)$ defined for all $t\geq 0$ and $x\in \R$, by
\begin{equation}
	\mathcal{F}_{k,n}u_t(x)=f(x)\,e^{-nt\vert x\vert^{\frac{2}{n}}}.
	\label{Fu=f}
\end{equation} 
\eqref{Fu=f} and \eqref{C1} lead to $\mathcal{F}_{k,n}u_t$ and $|\,.\,|^\frac{2}{n}\mathcal{F}_{k,n}u_t$ belong to $ L^2_{k,n} $, which implies by the inversion formula that $u_t$ belongs to the Sobolev-type space $W_{k,n}^2$, and $u\in C^1([0,T],W_{k,n}^2)$, where $T>0$. Note also that by virtue of \eqref{C2}, $u_0=\mathcal{F}_{k,n}^{-1}f=\mathcal{F}_{k,n}f((-1)^n.)$ belongs to $ L^2_{k,n}\cap L^1_{k,n} $. 

Moreover, applying the derivative with respect to $t$ to \eqref{Fu=f}, we obtain:
$$ \partial_t\mathcal{F}_{k,n}u_t(x)=n\mathcal{F}_{k,n}(\vert x\vert^{2-\frac{2}{n}}\Delta_k u_t)(x).$$
Then, by the $(k,\frac{2}{n})-$Fourier inversion formula, we obtain that $u$ is a solution of \eqref{1}. Specifically, for $T>0$,  $$u(T,x)=\frac{1}{\left(2T\right)^{k n-\frac{n}{2}+1}}e^{-\frac{n}{4T}\vert x\vert^\frac{2}{n}}  \star_{k,n}u_0((-1)^n x).$$
Using Young inequality \cite{Boubatra_Negzaoui_Sifi} $$|u(T,x)|\lesssim \|e^{-\frac{n}{4T}\vert x\vert^\frac{2}{n}}\|_{L^1_{k,n}}  \| u_0\|_{L^\infty_{k,n}}. $$
Since $$\| u_0\|_{L^\infty_{k,n}}=\| \mathcal{F}_{k,n}^{-1}f\|_{L^\infty_{k,n}}=\| \mathcal{F}_{k,n}f\|_{L^\infty_{k,n}}\lesssim e^{-nb|x|^\frac{2}{n}}$$ then we infer according to Theorem \ref{DHardyth}, when $T=a>0$, that :
if $ab>\frac{1}{4}$ then $u_t=0$. So, by \eqref{Fu=f}, we derive that $f=0$. %\\If $ab=\frac{1}{4}$ then $u(t,x)=\lambda\left(\frac{a}{t} \right)^{kn-\frac{n}{2}+1}\,e^{-\frac{n}{4t}\vert x\vert^{\frac{2}{n}} }$.Therefore  $$\mathcal{F}_{k,n}u_t(x)=f(x)\,e^{-nt\vert x\vert^{\frac{2}{n}}}=\lambda\left(\frac{a}{t} \right)^{kn-\frac{n}{2}+1}\,\mathcal{F}_{k,n}(e^{-\frac{n}{4t}\vert x\vert^{\frac{2}{n}}})=\lambda\,(2a)^{kn-\frac{n}{2}+1}e^{-nt\vert x\vert^{\frac{2}{n}}}$$

This observation confirms the interaction between Hardy's theorem and dynamic systems.

\end{rem}
\section{$L^p-L^q$ versions of Hardy's Theorem}
\subsection{Miyachi's theorem}
In contrast, Miyachi's theorem provides a more flexible yet stricter interpretation of the uncertainty principle: instead of imposing exact Gaussian decay, it requires a logarithmic integrability condition on the Fourier side.

Let's first present a Phragm\'en-Lindel\"of type lemma.
\begin{lem}\cite{Mi}\label{Phragmen_Miyachi}
	Let  $k\geq \frac{n-1}{2n}$ and  $h$ be an entire function on $\mathbb{C}$ such that:
	\begin{equation}
		\vert h(z) \vert \leq C e^{a \Re(z)^2}
		\label{LC1}
	\end{equation}
	\begin{equation}
		\int_{\mathbb{R}}log^+(\vert h(x)\vert)\vert x\vert^{2kn-n+1}dx<+\infty
		\label{LC2}
	\end{equation}
	for some positive constants $C$ and $a$, and where $log^+ r= \left\{ \begin{array}{cc}
		log(r) &\text{ if }r>1\\  0&\text{else}
	\end{array}
	\right..$\\
	Then $h$ is a constant function.
\end{lem}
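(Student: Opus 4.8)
The plan is to show that $h$ is bounded on all of $\mathbb{C}$ and then to invoke Liouville's theorem, which gives the constancy. The hypothesis \eqref{LC1} already provides strong one-directional control: on every vertical line $\Re z=c$ one has $|h(z)|\le Ce^{ac^{2}}$, a bound independent of $\Im z$, and in particular $|h(iy)|\le C$ on the imaginary axis. The difficulty is twofold. First, \eqref{LC1} permits growth of order $2$ in the real direction, so a Phragm\'en--Lindel\"of argument applied directly in a half-plane fails (order $2$ is supercritical for an opening of $\pi$, and also for the critical opening $\pi/2$). Second, \eqref{LC2} is an integral, not a pointwise, bound, so it cannot be fed into a naive maximum-principle estimate; it must enter through a harmonic-majorant (Nevanlinna/Poisson) mechanism.

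To lower the order I would first split $h=h_{e}+h_{o}$ into its even and odd parts. Each part again satisfies \eqref{LC1}, since $(\Re(-z))^{2}=(\Re z)^{2}$, and \eqref{LC2}, since $x\mapsto-x$ preserves the weight $|x|^{2kn-n+1}=|x|^{2\alpha+1}$. Writing $h_{e}(z)=H(z^{2})$ and $h_{o}(z)=z\,K(z^{2})$ with $H,K$ entire, the substitution $w=z^{2}$ gives $(\Re z)^{2}=\tfrac12(|w|+\Re w)$, so \eqref{LC1} becomes $|H(w)|,|K(w)|\lesssim e^{\frac{a}{2}(|w|+\Re w)}$. This is of exponential type (order $1$): it is bounded on the negative real axis $(-\infty,0]$, where $|w|+\Re w=0$, and grows at most like $e^{aw}$ on $[0,\infty)$. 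Moreover \eqref{LC2}, after the change of variable, becomes $\int_{0}^{\infty}\log^{+}|H(w)|\,w^{\alpha}\,dw<+\infty$, with an analogous statement for $K$.

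The core step is then an order-$1$ problem: an entire function $H$ of exponential type, bounded on $(-\infty,0]$ and with $\int_{0}^{\infty}\log^{+}|H(w)|\,w^{\alpha}\,dw<+\infty$, is constant. I would work in the upper and lower half $w$-planes, whose common boundary is $\mathbb{R}$: on $(-\infty,0]$ the boundary data are bounded, while on $[0,\infty)$ the weight $w^{\alpha}$ controls $(1+w^{2})^{-1}$ at infinity (recall $\alpha=kn-\tfrac n2\ge-\tfrac12$ from $k\ge\frac{n-1}{2n}$), so that $\int_{\mathbb{R}}\log^{+}|H(t)|\,(1+t^{2})^{-1}\,dt<\infty$, placing $H$ in the Cartwright class. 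A Poisson--Jensen/Nevanlinna representation for $\log|H|$ in each half-plane---made legitimate despite the exponential growth by inserting a regularizing factor $e^{\mp i\varepsilon w}$ and letting $\varepsilon\to0$---then yields a harmonic majorant built from these boundary values; boundedness on the negative axis forces the indicator to vanish in that direction, and the super-polynomial strength of the weight $w^{\alpha}$ on the positive axis excludes any nonzero type or polynomial factor. Hence $H$ is bounded, so constant by Liouville, and likewise $K$.

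Finally I would descend to $h$: $h_{e}=H(z^{2})$ is constant, while $h_{o}(z)=z\,K(z^{2})=cz$; but a nonzero $cz$ violates \eqref{LC2}, since $\log^{+}|cx|\sim\log|x|$ while $|x|^{2\alpha+1}$ is non-integrable at infinity, forcing $c=0$. Thus $h=h_{e}$ is constant, as claimed. I expect the main obstacle to be precisely the analytic core of the third paragraph: justifying the Nevanlinna representation in the presence of exponential growth (the regularization-and-limit argument) and extracting genuine constancy from the weighted log-integral in the critical order-$1$ half-plane setting, rather than merely a qualitative growth bound.
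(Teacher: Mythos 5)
The paper states Lemma~\ref{Phragmen_Miyachi} without proof (it is the classical lemma underlying Miyachi's theorem, cf.\ \cite{Mi}), so there is no in-paper argument to compare with; judged on its own, your proposal has a genuine gap at exactly the place you flag, and the gap is worse than a missing justification: the ``core step'' is false as stated. The function $H(w)=e^{icw}$ with $c>0$ is entire of exponential type, satisfies $|H(u)|=1$ for all real $u$ (hence is bounded on $(-\infty,0]$ and has $\int_0^\infty \log^+|H(u)|\,u^\alpha\,du=0$), yet is not constant. Your reduction from $|H(w)|\le Ce^{\frac{a}{2}(|w|+\Re w)}$ to ``exponential type, bounded on $(-\infty,0]$, weighted log-integral finite on $[0,\infty)$'' discards precisely the information that makes the lemma true: the exponent $\frac{a}{2}(|w|+\Re w)$ is \emph{quadratically} small in the angle near the direction $\theta=\pi$ (equivalently, in the $z$-plane, \eqref{LC1} bounds $h$ on every vertical line, not merely on the imaginary axis), and it is only this that rules out $H(w)=e^{icw}$, i.e.\ $h_e(z)=e^{icz^2}$, which violates \eqref{LC1} but none of your three retained hypotheses. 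The repair mechanisms you sketch do not recover it: for a Cartwright-class function the indicator automatically vanishes in the real directions, so ``the indicator vanishes at $\theta=\pi$'' excludes nothing, and the weight $u^\alpha$ (which is polynomial, not super-polynomial, since $\alpha\ge-\frac12$) sees nothing for a function of modulus $1$ on $\R$.

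The peripheral steps are fine: the even/odd splitting, the identity $(\Re z)^2=\frac12(|w|+\Re w)$ under $w=z^2$, the verification $\alpha\ge -\frac12$, and the elimination of $h_o(z)=cz$ via divergence of $\int\log|x|\,|x|^{2\alpha+1}dx$ are all correct, and lowering the order by squaring is a legitimate idea. But after the reduction you sit exactly at the critical Phragm\'en--Lindel\"of exponent (order $1$ in a half-plane of the $w$-variable, order $2$ in a quadrant of the $z$-variable), where no theorem of the form ``type $+$ one-directional boundedness $+$ weighted log-integrability $\Rightarrow$ constant'' can hold. A correct argument must keep the full two-sided bound \eqref{LC1} and combine it with an outer function built from the Poisson integral of $\log^+|h|$ on the boundary, in the style of Miyachi's original proof. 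As written, the proposal is a plan organized around a false intermediate lemma, not a proof.
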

 \begin{lem}\label{entireT_Miyachi}
	Let $p,q\in[1,+\infty]$ and $a>0$. Suppose that $f$ is a measurable function on $\mathbb{R}$ satisfying
	\begin{equation}\label{C_lemma_Miyachi}
		e^{na|x|^{\frac{2}{n}}} f(x) \in L^p_{k,n} + L^q_{k,n}.
	\end{equation}
	Then ${\mathcal{T}}_{1}f$ and ${\mathcal{T}}_{2}f$ are well-defined and extend to an entire function on $\mathbb{C}$. 
	Furthermore, for every $z\in\mathbb{C}$, one has
	\begin{equation}\label{T_ing_miyachi}
		\vert \mathcal{T}_lf(z)\vert \lesssim 
		e^{\tfrac{n}{4a}\,(\Im (z))^2}, \quad l=1,2.
	\end{equation}
\end{lem}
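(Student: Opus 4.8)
The plan is to reduce the claim to the single-space estimates already established in Lemma \ref{entire1} and Lemma \ref{entire2}, by splitting the hypothesis \eqref{C_lemma_Miyachi} along the sum $L^p_{k,n}+L^q_{k,n}$. First I would write $e^{na|x|^{\frac{2}{n}}}f=g_p+g_q$ with $g_p\in L^p_{k,n}$ and $g_q\in L^q_{k,n}$, and put $f_p=e^{-na|x|^{\frac{2}{n}}}g_p$ and $f_q=e^{-na|x|^{\frac{2}{n}}}g_q$. Then $f=f_p+f_q$, with $e^{na|x|^{\frac{2}{n}}}f_p\in L^p_{k,n}$ and $e^{na|x|^{\frac{2}{n}}}f_q\in L^q_{k,n}$; in other words $f_p$ satisfies the hypothesis \eqref{Cond.1} of Lemma \ref{entire1} with exponent $p$, and $f_q$ satisfies it with exponent $q$.

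Since $\mathcal{T}_1$ depends only on $f_e$, $\mathcal{T}_2$ only on $f_o$, and the weight $e^{na|\cdot|^{\frac{2}{n}}}$ is even, the decomposition is compatible with the even/odd splitting and each piece still meets \eqref{Cond.1}. Applying Lemma \ref{entire1} to $f_p$ (exponent $p$) and to $f_q$ (exponent $q$) shows that the defining integrals of $\mathcal{T}_1 f_p$ and $\mathcal{T}_1 f_q$ converge absolutely, each extends to an entire function on $\mathbb{C}$, and each obeys $|\mathcal{T}_1 f_p(z)|\le C(k,n,p)\,e^{\frac{n}{4a}\Im(z)^2}$ and $|\mathcal{T}_1 f_q(z)|\le C(k,n,q)\,e^{\frac{n}{4a}\Im(z)^2}$. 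The same argument through Lemma \ref{entire2} handles $\mathcal{T}_2 f_p$ and $\mathcal{T}_2 f_q$.

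It remains to recombine. Because the two integrands are absolutely integrable, the integral defining $\mathcal{T}_l f$ itself converges absolutely (its integrand is dominated by the sum of the other two) and equals $\mathcal{T}_l f_p+\mathcal{T}_l f_q$ by linearity of the integral; in particular the value is independent of the non-unique decomposition. As a sum of two entire functions, $\mathcal{T}_l f$ is entire, and the triangle inequality gives $|\mathcal{T}_l f(z)|\le|\mathcal{T}_l f_p(z)|+|\mathcal{T}_l f_q(z)|\lesssim e^{\frac{n}{4a}\Im(z)^2}$, which is \eqref{T_ing_miyachi}. The only point needing care --- more bookkeeping than genuine obstacle --- is precisely this well-definedness and decomposition-independence, which the absolute convergence settles; the logarithmic hypotheses \eqref{LC1}--\eqref{LC2} of Lemma \ref{Phragmen_Miyachi} play no role at this stage and enter only when that lemma is later used to prove Miyachi's theorem.
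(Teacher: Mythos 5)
Your proposal is correct and matches the paper's own proof: both split $f=f_1+f_2$ according to the decomposition $L^p_{k,n}+L^q_{k,n}$, apply Lemmas \ref{entire1} and \ref{entire2} to each summand, and conclude by linearity and the triangle inequality. Your extra remarks on absolute convergence and independence of the decomposition are a welcome (if minor) tightening of the paper's terser argument.
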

\begin{proof}
	Assume that $f$ satisfies condition \eqref{C_lemma_Miyachi}. 
	Then there exist measurable functions $f_1$ and $f_2$ such that 
	$$   e^{na|\,\cdot\,|^{\tfrac{2}{n}}} f_1 \in L^p_{k,n}, 
	\quad 
	e^{na|\,\cdot\,|^{\tfrac{2}{n}}} f_2 \in L^q_{k,n}  \qquad \text{ and }\qquad f =f_1+f_2  $$
	Consequently, $f_1$ and $f_2$ satisfy the hypotheses of Lemma~\ref{entire1} and Lemma~\ref{entire2}. Moreover,
	\[
	\mathcal{T}_{1}f = \mathcal{T}_{1}(f_1) + \mathcal{T}_{1}(f_2)
	\quad \text{and} \quad
	\mathcal{T}_{2}f = \mathcal{T}_{2}(f_1) + \mathcal{T}_{2}(f_2).
	\]
	Therefore, $\mathcal{T}_{1}f$ and $\mathcal{T}_{2}f$ inherit the conclusion of the lemma.
	
\end{proof}

%This motivates the establishment of Miyachi's theorem for the $(k,\frac{2}{n})-$Fourier transform, presented in the following:
\begin{thm}\label{th_Miyachi}
	Let $f$ be a mesurable function on $\mathbb{R}$ such that 
	\begin{equation}
		e^{na\vert x\vert^{\frac{2}{n}}}f\in L^p_{k,n}+L^q_{k,n}
		\label{C1_Miyachi}
	\end{equation} and
	\begin{equation}
		\int_{\mathbb{R}}\log^+\left|  \frac{e^{nb\vert x\vert^\frac{2}{n}}\mathcal{F}_{k,n}f_e(x) }{C}\right|  d\mu_{k,n}(x)+\int_{\mathbb{R}}\log^+\left|  \frac{e^{nb\vert x\vert^\frac{2}{n}}\mathcal{F}_{k,n}f_o(x) }{C}\right|  d\mu_{k,n}(x)<+\infty
		\label{C2_Miaychi}
	\end{equation}
	for some constants $a, b, C > 0$ and $p,q\in [1,+\infty]$. Then
	\begin{enumerate}
		\item If $a.b>\frac{1}{4}$, then $f=0$.
		\item If $a.b=\frac{1}{4} $, then $f(x)=\lambda e^{-na\vert x\vert^\frac{2}{n}}$, $\quad \vert \lambda\vert \leq (2a)^{kn - \frac{n}{2} + 1} . C$.
		\item If $a.b<\frac{1}{4}$, then there exist multiple functions satisfying the given constraints.
	\end{enumerate}
\end{thm}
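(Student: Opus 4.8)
The plan is to transcribe the proof of Theorem~\ref{Hardyth}, replacing the $L^p$-based Phragmén--Lindelöf lemma by its logarithmic counterpart Lemma~\ref{Phragmen_Miyachi}, and replacing Lemmas~\ref{entire1}--\ref{entire2} by Lemma~\ref{entireT_Miyachi} so as to accommodate the split hypothesis \eqref{C1_Miyachi}. Concretely, I would set
\[
h_l(z)=\frac{1}{C}\,e^{\frac{n}{4a}z^2}\,\mathcal{T}_l f(z),\qquad l=1,2,
\]
which are entire on $\mathbb{C}$ by Lemma~\ref{entireT_Miyachi}. Since $|e^{\frac{n}{4a}z^2}|=e^{\frac{n}{4a}(\Re(z)^2-\Im(z)^2)}$, the bound \eqref{T_ing_miyachi} immediately yields $|h_l(z)|\lesssim e^{\frac{n}{4a}\Re(z)^2}$, so each $h_l$ meets the growth requirement \eqref{LC1} of Lemma~\ref{Phragmen_Miyachi} (the factor $1/C$ being absorbed in the constant).

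The heart of the argument is to verify the logarithmic integrability \eqref{LC2} for $h_l$ on $\mathbb{R}$. Using $\mathcal{T}_1 f(x)=\mathcal{F}_{k,n}f_e(x^n)$ and $\mathcal{T}_2 f(x)=\mathcal{F}_{k,n}f_o(x^n)$ together with \eqref{FeFo}, on the real axis $h_l(x)$ is a linear combination of $\tfrac1C e^{\frac{n}{4a}x^2}\mathcal{F}_{k,n}f(\pm x^n)$. When $ab\geq\frac14$ one has $\frac{n}{4a}\leq nb$, hence $|h_l(x)|\lesssim \tfrac1C\bigl(G(x^n)+G(-x^n)\bigr)$ with $G(\xi)=e^{nb|\xi|^{2/n}}|\mathcal{F}_{k,n}f(\xi)|$. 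Combining the subadditivity $\log^+\max(A,B)\leq\log^+A+\log^+B$ with the change of variables $\xi=x^n$, which turns $|x|^{2kn-n+1}dx$ into a constant multiple of $d\mu_{k,n}(\xi)$, reduces $\int_{\mathbb{R}}\log^+|h_l(x)|\,|x|^{2kn-n+1}dx$ to a constant multiple of $\int_{\mathbb{R}}\log^+\bigl|G(\xi)/C\bigr|\,d\mu_{k,n}(\xi)$, which is finite by \eqref{C2_Miaychi}. Dividing by $C$ at the outset is precisely what places the constant $C$ inside $\log^+$ and avoids the divergent term $\int_{\mathbb{R}}\log^+C\,d\mu_{k,n}=+\infty$. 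Lemma~\ref{Phragmen_Miyachi} then forces $h_l\equiv c_l$, i.e.\ $\mathcal{T}_l f(z)=C c_l\,e^{-\frac{n}{4a}z^2}$.

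From $\mathcal{F}_{k,n}f(\pm x^n)=\mathcal{T}_1f(x)\pm\mathcal{T}_2 f(x)=C(c_1\pm c_2)\,e^{-\frac{n}{4a}x^2}$ the three cases follow. If $ab>\frac14$, then $e^{nb x^2}\mathcal{F}_{k,n}f(\pm x^n)=C(c_1\pm c_2)e^{(nb-\frac{n}{4a})x^2}$ grows, so finiteness of \eqref{C2_Miaychi} forces $c_1+c_2=c_1-c_2=0$, whence $f=0$. If $ab=\frac14$, then $\frac{n}{4a}=nb$ and $\mathcal{F}_{k,n}f(x)=(c_1+\mathrm{sgn}(x)c_2)\,C\,e^{-nb|x|^{2/n}}$; the parity discussion of Theorem~\ref{Hardyth} --- oddness removing the odd part when $n$ is odd, and the confluent-hypergeometric (${}_1F_1$) growth argument when $n$ is even, now run against \eqref{C1_Miyachi} instead of \eqref{C1} --- again kills the odd part and gives $f(x)=\lambda e^{-na|x|^{2/n}}$. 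Inserting this into \eqref{FGaussian} yields $e^{nb|\xi|^{2/n}}\mathcal{F}_{k,n}f(\xi)=\lambda(2a)^{-(kn-\frac n2+1)}$, a constant, which against the infinite-mass measure $\mu_{k,n}$ can satisfy \eqref{C2_Miaychi} only if its $\log^+$ vanishes, i.e.\ $|\lambda|\leq C(2a)^{kn-\frac n2+1}$. If $ab<\frac14$, the functions $f_\delta(x)=e^{-n\delta|x|^{2/n}}$ with $a<\delta<\frac{1}{4b}$ satisfy \eqref{C1_Miyachi} and, via \eqref{FGaussian}, also \eqref{C2_Miaychi}, furnishing infinitely many examples.

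The main obstacle is the second paragraph: correctly matching the weighted $\log^+$-integral \eqref{C2_Miaychi} on the spectral side to the hypothesis \eqref{LC2} of the Phragmén--Lindelöf lemma through the substitution $\xi=x^n$, while absorbing $C$ so that no divergent $\int_{\mathbb{R}}\log^+C\,d\mu_{k,n}$ arises; the remaining case analysis is a routine transcription of the corresponding steps of Theorem~\ref{Hardyth}, the only genuinely new output being the explicit bound on $|\lambda|$, which I expect to read off directly from the constancy of $e^{nb|\xi|^{2/n}}\mathcal{F}_{k,n}f$ and the infinitude of $\mu_{k,n}(\mathbb{R})$.
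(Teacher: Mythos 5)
Your proposal is correct and follows essentially the same route as the paper: the same auxiliary functions $h_1,h_2$ (up to pre-dividing by $C$ rather than applying Lemma \ref{Phragmen_Miyachi} to $h_l/C$), the same reduction of \eqref{LC2} to \eqref{C2_Miaychi} via subadditivity of $\log^+$ and the substitution $\xi=x^n$ on each half-line, and the same case analysis reusing the parity and ${}_1F_1$ arguments from Theorem \ref{Hardyth}. Your derivation of the bound $|\lambda|\leq (2a)^{kn-\frac n2+1}C$ from the constancy of $e^{nb|\cdot|^{2/n}}\mathcal{F}_{k,n}f$ against the infinite mass of $\mu_{k,n}$ is the same mechanism the paper uses to force $\log^+$ of a constant to vanish.
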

\begin{proof}
	Let $a.b\geq\frac{1}{4}$. 
We make use of the functions $h_1$ and $h_2$
previously introduced in equation \eqref{h1h2}.
	Then, using inequality \eqref{T_ing_miyachi}, we get:
	\begin{align*}
		\vert h_l(z)\vert \lesssim   e^{\frac{n}{4a}(Re(z))^2},\qquad l=1,\,2.
	\end{align*}
On the other hand, since $\frac{1}{4a}\leq b$, we have 
$$
		\int_{\mathbb{R}}\log^+\left|  \frac{h_2(x)}{C}\right| \vert x\vert^{2kn-n+1}dx \leq \int_{\mathbb{R}}\log^+\left|  \frac{e^{nb x^2}\mathcal{F}_{k,n}f_o(x^n)}{C} \right|  \vert x\vert^{2kn-n+1}dx.
$$
Since the change of variable $u=x^n$ is valid only on $(0,+\infty)$ for $n\in\mathbb{N}$, we may split the integral into $\int_0^{+\infty}$ and $\int_{-\infty}^0$ and consider $t=-x$, we find, using \eqref{FeFo},
$$\int_{\mathbb{R}}\log^+\left|  \frac{e^{nb x^2}\mathcal{F}_{k,n}f_o(x^n)}{C} \right|  \vert x\vert^{2kn-n+1}dx\lesssim \int_0^{+\infty}\log^+\left|  \frac{e^{nb\vert x\vert^\frac{2}{n}}\mathcal{F}_{k,n}f_o(x) }{C}\right| d\mu_{k,n}(x)$$
	By virtue of \eqref{C2_Miaychi}, we have
$$\int_0^{+\infty}\log^+\left|  \frac{e^{nb\vert x\vert^\frac{2}{n}}\mathcal{F}_{k,n}f_o(x) }{C}\right| d\mu_{k,n}(x)<+\infty.
$$
Consequently
\begin{equation}\label{h1}
	\int_{\mathbb{R}}\log^+\left|  \frac{h_2(x)}{C}\right| \vert x\vert^{2kn-n+1}dx \lesssim\int_{\mathbb{R}}\log^+\left|  \frac{e^{nb x^2}\mathcal{F}_{k,n}f_o(x^n)}{C} \right|  \vert x\vert^{2kn-n+1}dx <+\infty
\end{equation}
Then, by similar arguments, we obtain
\begin{equation}\label{h2}
 \int_{\mathbb{R}}\log^+\left|  \frac{h_1(x)}{C}\right| \vert x\vert^{2kn-n+1}dx \lesssim\int_{\mathbb{R}}\log^+\left|  \frac{e^{nb x^2}\mathcal{F}_{k,n}f_e(x^n)}{C} \right|  \vert x\vert^{2kn-n+1}dx <+\infty
\end{equation}
Noting that  $\frac{h_1}{C}$ and $\frac{h_2}{C}$  satisfy the assumptions of Lemma \ref{Phragmen_Miyachi}, we deduce that $h_1=C\,\lambda_1$ and $h_2=C\,\lambda_2$, where $\lambda_1,\lambda_2\in \mathbb{C}$. Thus %for $x\in \mathbb{R}$,
	\begin{equation}
		{\mathcal{F}}_{k,n}f_e(x^n)=C \lambda_1e^{-\frac{n}{4a}x^2},\text{ and }\quad{\mathcal{F}}_{k,n}f_o(x^n)=C \lambda_2e^{-\frac{n}{4a}x^2},
		\label{Forme_h_1=of_Ff}
	\end{equation} 
$\bullet$ If $ab>\frac{1}{4}$, then combining  \eqref{Forme_h_1=of_Ff} and \eqref{h1}, we get $\lambda_2=0$. Same argument using \eqref{Forme_h_1=of_Ff} and \eqref{h2} allows to deduce $\lambda_1=0$.
By proceeding similarly to the proof of Theorem \ref{Hardyth}, we conclude that $f = 0$ almost everywhere.
\\ $\bullet$ 
If $ab = \tfrac{1}{4}$, then relations \eqref{Forme_h_1=of_Ff}, \eqref{h1},  and  \eqref{h2} lead to $\vert \lambda_1\vert \leq C$ and $\vert \lambda_2\vert \leq C$, where $C$ is the constant provided by the condition \eqref{C2_Miaychi}. 
Following the procedure in 2. of the proof of Theorem \ref{Hardyth}, step by step, we find  that $\lambda_2=0$ and
%$$ \lambda_2\,\,_1F_1\left(\frac{n}{2}; kn + \frac{n}{2} + 1; na\vert x\vert^{\frac{2}{n}}\right)\, x\; \in L^p_{k,n}+L^q_{k,n},$$	which holds if and only if $\lambda_2=0 $. This constraint is necessary for 	$f$ to still satisfy condition \eqref{C1_Miyachi}. Therefore,
$$ f(x) = (2a)^{kn - \frac{n}{2} + 1}\lambda_1\,e^{-na\vert x\vert^{\frac{2}{n}}}, \qquad\text{ with }\vert \lambda_1\vert <C.$$
$\bullet$ If $ab < \tfrac{1}{4}$, then    
we choose $a<\delta<\frac{1}{4b}$ and we prove that the family of functions $$f_\delta(x)=\lambda e^{-\delta n\vert x\vert^{\frac{2}{n}}} ,\; \vert\lambda\vert < C,$$ satisfies conditions \eqref{C1_Miyachi} and \eqref{C2_Miaychi}.

\end{proof}
\subsection{Cowling Price theorem}
As an application of Miyachi's theorem, we recover an $L^p-L^q$ version of Hardy's theorem, commonly referred to as the Cowling--Price theorem.

\begin{thm} \label{thCowlingPrice}Consider  $a,b>0$, $1\leq p,q\leq +\infty$ such that $\min(p,q)<+\infty$. Lef $f$ be a measurable function on $\mathbb{R}$ satisfying 
	\begin{equation}
		\Vert e^{na\vert x\vert^{\frac{2}{n}}}f\Vert_{L^p_{k,n}}<+\infty,
		\label{C1_cowling}
	\end{equation} and 
	\begin{equation}
		\Vert e^{nb\vert x\vert^{\frac{2}{n}}}\mathcal{F}_{k,n} f\Vert_{L^q_{k,n}}<+\infty. \label{C2_cowling} 
	\end{equation}
	Then we have 
	\begin{enumerate}
		\item[1.] If $a.b \geq \frac{1}{4},$ then $f=0$ almost everywhere.
		\item[2.]  If $a.b<\frac{1}{4},$ then there exist infintely many linearly independ functions satisfying the conditions \eqref{C1_cowling} and \eqref{C2_cowling}.
	\end{enumerate}
\end{thm}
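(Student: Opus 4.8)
The plan is to deduce the theorem directly from Miyachi's theorem (Theorem~\ref{th_Miyachi}), so the core of the work is to show that the $L^p$--$L^q$ hypotheses \eqref{C1_cowling} and \eqref{C2_cowling} are \emph{stronger} than the Miyachi hypotheses \eqref{C1_Miyachi} and \eqref{C2_Miaychi}, with the same constants $a$ and $b$. The first implication is immediate: \eqref{C1_cowling} says $e^{na|x|^{2/n}}f\in L^p_{k,n}\subset L^p_{k,n}+L^q_{k,n}$, which is exactly \eqref{C1_Miyachi}. The second implication is the substantial point and is where the hypothesis $\min(p,q)<+\infty$ enters only mildly.

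To pass from \eqref{C2_cowling} to \eqref{C2_Miaychi}, set $h(x)=e^{nb|x|^{2/n}}|\mathcal{F}_{k,n}f(x)|$ and split on $q$. If $q<+\infty$, then $h\in L^q_{k,n}$ by \eqref{C2_cowling}, and the elementary bound $\log^+ s\le \tfrac1q s^q$ (trivial for $s\le 1$, and $\log s=\tfrac1q\log s^q\le\tfrac1q s^q$ for $s>1$) gives $\int_{\mathbb{R}}\log^+ h\,d\mu_{k,n}\le \tfrac1q\|h\|_{L^q_{k,n}}^q<+\infty$, so \eqref{C2_Miaychi} holds with $C=1$. If instead $q=+\infty$, then \eqref{C2_cowling} means $|\mathcal{F}_{k,n}f(x)|\le M\,e^{-nb|x|^{2/n}}$ a.e.\ for some $M>0$; choosing $C\ge M$ makes the integrand of \eqref{C2_Miaychi} vanish identically. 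In both cases the Miyachi hypotheses are satisfied.

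I would then apply Theorem~\ref{th_Miyachi} in the regime $ab\ge\tfrac14$. If $ab>\tfrac14$ this yields $f=0$ a.e.\ at once. If $ab=\tfrac14$, Miyachi only gives $f(x)=\lambda\,e^{-na|x|^{2/n}}$, and the remaining task is to upgrade this to $f=0$ by feeding the candidate back into the hypotheses. Using \eqref{FGaussian} together with $\tfrac1{4a}=b$ one has $\mathcal{F}_{k,n}f=\lambda\,(2a)^{-(kn-\frac n2+1)}e^{-nb|x|^{2/n}}$, so \eqref{C1_cowling} forces $|\lambda|\,\|1\|_{L^p_{k,n}}<+\infty$ and \eqref{C2_cowling} forces $|\lambda|\,\|1\|_{L^q_{k,n}}<+\infty$. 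Since $d\mu_{k,n}$ has infinite total mass (the weight $|x|^{2k+\frac2n-2}$ has exponent $>-1$ for $k\ge\frac{n-1}{2n}$, so the integral diverges at infinity), we have $\|1\|_{L^r_{k,n}}=+\infty$ for every $r<+\infty$; as $\min(p,q)<+\infty$, at least one constraint forces $\lambda=0$, whence $f=0$ and part~1 is complete.

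For part~2 ($ab<\tfrac14$) I would exhibit the family explicitly. Since $a<\tfrac1{4b}$, choose an infinite set of distinct parameters $\delta_j\in(a,\tfrac1{4b})$ and put $f_j(x)=e^{-n\delta_j|x|^{2/n}}$. Each $f_j$ satisfies \eqref{C1_cowling} because $e^{na|x|^{2/n}}f_j=e^{-n(\delta_j-a)|x|^{2/n}}\in L^p_{k,n}$, and satisfies \eqref{C2_cowling} because by \eqref{FGaussian} the transform $\mathcal{F}_{k,n}f_j$ is proportional to $e^{-\frac{n}{4\delta_j}|x|^{2/n}}$ with $\tfrac1{4\delta_j}>b$; distinct exponentials in the variable $|x|^{2/n}$ are linearly independent, giving the required infinite linearly independent family. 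I expect the main obstacle to be the log-integrability reduction of the preceding paragraph---the clean dichotomy between $q<+\infty$ and $q=+\infty$---and the removal of the critical Gaussian at $ab=\tfrac14$, which is precisely where the $L^p$--$L^q$ formulation is strictly stronger than Miyachi's logarithmic condition; both rest on the infiniteness of the measure $\mu_{k,n}$.
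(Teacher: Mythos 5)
Your proposal is correct and follows essentially the same route as the paper: reduce to Miyachi's theorem (Theorem~\ref{th_Miyachi}) via the inclusion of $L^p_{k,n}$ into a sum of Lebesgue spaces and the elementary bound $\log^+ s\lesssim s^q$, then kill the critical Gaussian at $ab=\tfrac14$ using the infiniteness of $\mu_{k,n}$ together with $\min(p,q)<+\infty$. You are in fact slightly more careful than the paper in two spots --- treating $q=+\infty$ separately in the logarithmic estimate, and noting that when $p=+\infty$ one must invoke \eqref{C2_cowling} rather than \eqref{C1_cowling} to force $\lambda=0$ --- but these are refinements of the same argument, not a different proof.
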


\begin{proof}
	Assume that $f$ satisfies the hypotheses of Theorem \ref{thCowlingPrice}. 
	Since
	$$
	L^p(d\mu_{k,n}) \subset L^1(d\mu_{k,n}) + L^\infty(d\mu_{k,n}),
	$$
	it follows that $f$ fulfills condition~\eqref{C1_Miyachi}. 
	Moreover, using the elementary bound
	$$
	\log^+|x| \leq |x|^q, \qquad x\in\R,
	$$ and according to \eqref{FeFo}, 	we obtain $|\mathcal{F}_{k,n}f_e(x)|\leq |\mathcal{F}_{k,n}f(x)|$ and $|\mathcal{F}_{k,n}f_o(x)|\leq |\mathcal{F}_{k,n}f(x)|$. Hence
	\begin{equation}
		\int_{\R} \log^+\!\left( \frac{e^{nb|x|^{2/n}}\,|\mathcal{F}_{k,n}f_e(x)|}{C} \right) 
		d\mu_{k,n}(x) 
		\leq \frac{1}{C^q} \int_{\R} e^{qnb|x|^{2/n}} |\mathcal{F}_{k,n}f(x)|^q \, d\mu_{k,n}(x) 
		< \infty
	\end{equation}
	and \begin{equation}
		\int_{\R} \log^+\!\left( \frac{e^{nb|x|^{2/n}}\,|\mathcal{F}_{k,n}f_o(x)|}{C} \right) 
		d\mu_{k,n}(x) 
		\leq \frac{1}{C^q} \int_{\R} e^{qnb|x|^{2/n}} |\mathcal{F}_{k,n}f(x)|^q \, d\mu_{k,n}(x) 
		< \infty.
	\end{equation}
Thus, the assumptions of Theorem \ref{th_Miyachi} are satisfied. In particular, if $ab > \tfrac{1}{4}$, we necessarily obtain $f = 0$ almost everywhere. When $ab < \tfrac{1}{4}$, there exist infinitely many nontrivial functions satisfying simultaneously \eqref{C1_cowling} and \eqref{C2_cowling}. In the critical case $ab = \tfrac{1}{4}$, the function $f$ must be of the form
$
f(x) = \lambda e^{-na\vert x\vert^{2/n}}
$
for some constant $\lambda$. However, the condition \eqref{C1_cowling} forces $\lambda = 0$, and therefore $f = 0$.

\end{proof}
%\subsection*{Disclosure statement}No potential conflict of interest was reported by the authors.
%\bibliographystyle{abbrv}  % Choose a bibliography style (e.g., plain, abbrv, alpha)
%\bibliography{Jilani_Negzaoui}

\end{document}